\documentclass[preprint]{siamltex}
\usepackage{amsmath} 
\usepackage{amssymb}  
\usepackage{graphics}
\usepackage{epsfig}
\usepackage[ruled,vlined]{algorithm2e}
\usepackage{subfig}
\usepackage{amsfonts}
\usepackage{mathrsfs}
\usepackage{bbm}
\usepackage{color}
\usepackage{pgfpages}
\usepackage{tikz}
\usepackage{tkz-berge}
\usetikzlibrary{arrows,snakes,backgrounds}


\usepackage[utf8]{inputenc} 

\newtheorem{ex}{Example}

\usepackage{geometry} 
\geometry{a4paper} 

\usepackage{graphicx} 
\usepackage{caption}
\usepackage{booktabs} 
\usepackage{array} 
\usepackage{paralist} 
\usepackage{verbatim} 





\newtheorem{assumption}{Assumption}[section]
\newcommand{\beq}{\begin{equation}}
\newcommand{\eeq}{\end{equation}}

\newcommand{\E}{{\mathcal E}}

\newcommand{\V}{{\mathcal V}}

\newcommand{\R}{\mathbb R}

\newcommand{\one}{\mathbbm{1}}

\def\eps{\varepsilon}

\newcommand{\goes}{\rightarrow}

\newcommand\Lap{\mathrm{Lap}}
\newcommand\sym{\mathrm{Sym}}
\newcommand\Pact {P^+}
\newcommand\Gact{G^+}

\hyphenation{pseudo-spec-trum}


\title{Graph partitioning using 
matrix differential equations}

\author{Eleonora Andreotti\footnotemark[1] \and Dominik Edelmann\footnotemark[3]\ ,  \\Nicola Guglielmi\footnotemark[1] \and Christian Lubich\footnotemark[3] }

\begin{document}

\maketitle

\renewcommand{\thefootnote}{\fnsymbol{footnote}}
\footnotetext[1]{Dipartimento di Ingegneria Scienze Informatiche e Matematica, 
Universit\`a degli Studi di L' Aquila,
Via Vetoio - Loc.~Coppito, 
I-67010 L' Aquila and Gran Sasso Science Institute, L'Aquila, Italy. Email: {\tt guglielm@univaq.it}}
\footnotetext[2]{Dipartimento di Ingegneria Scienze Informatiche e Matematica, 
Universit\`a degli Studi di L' Aquila,
Via Vetoio - Loc.~Coppito,
I-67010    L' Aquila,  Italy. Email: {\tt eleonora.andreotti@graduate.univaq.it}}
\footnotetext[3]{Mathematisches Institut,
       Universit\"at T\"ubingen,
       Auf der Morgenstelle 10,
       D--72076 T\"ubingen,
       Germany. Email: {\tt lubich@na.uni-tuebingen.de, dominik.edelmann@na.uni-tuebingen.de}}
\renewcommand{\thefootnote}{\arabic{footnote}}

\begin{abstract}
Given a connected undirected weighted graph, we are concerned with problems related to partitioning the graph. First of all we look for the closest disconnected graph (the minimum cut problem), 
here with respect to the Euclidean norm.
We are interested in the case of constrained minimum cut problems, where constraints
include cardinality or membership requirements, which leads to NP-hard combinatorial optimization problems. 
Furthermore, we are interested in ambiguity issues, that is in the robustness of  clustering algorithms
that are based on Fiedler spectral partitioning. 
The  above-mentioned problems are restated as matrix nearness 
problems for the weight matrix of the graph. A key element in the solution of these matrix nearness problems is the
 use of a constrained gradient system of matrix differential equations.
\end{abstract}

\begin{keywords} Constrained minimum cut; spectral graph partitioning;  algebraic connectivity; Fiedler vector; matrix nearness problem;  constrained gradient flow;  matrix differential equation
\end{keywords}

\begin{AMS}15A18, 65K05 \end{AMS}

\pagestyle{myheadings}
\thispagestyle{plain}
\markboth{E.~Andreotti, D.~Edelmann, N.~Guglielmi and C.~Lubich}{Graph partitioning using 
differential equations}

\section{Introduction}
In this paper we present a novel approach to partitioning a connected weighted undirected graph. We consider the Frobenius-norm minimum cut problem and allow for constraints such as prescribing the minimum cardinality of connected components or assigning  {\it a priori} selected vertices to a component. We use spectral graph theory as pioneered by Fiedler \cite{fiedler}, see also the monograph by Chung \cite{Ch97} and the introductory articles \cite{Sp07,VL07}. We formulate and use a gradient system of matrix differential equations to drive the smallest nonzero eigenvalue of the graph Laplacian to zero. Once this eigenvalue becomes zero, the graph is disconnected and the corresponding eigenvector indicates the membership of vertices to the connected components. This approach can be extended to other partitioning problems beyond the constrained minimum cut problems considered here.

The approach of this paper takes basic ideas and techniques of recent algorithms for eigenvalue optimization via differential equations, as given for example in \cite{GL11,GKL15,GLM17,GL18},  to another application area. A common feature is a two-level procedure, where on the inner level a gradient flow drives perturbations to the original matrix of a fixed size into a (local) minimum of a functional that depends on eigenvalues and possibly eigenvectors, and in an outer iteration the perturbation size is determined such that the functional becomes zero. As with the previous algorithms cited above, the algorithms presented here cannot guarantee to find the {\it global} minimum of a non-smooth, non-convex optimization problem, or of an NP-hard combinatorial optimization problem.  There are cases where our algorithm could get stuck in a local minimum, and we will present a contrived example where this happens. Even with this caveat, the presented algorithm performs remarkably well in the examples from the literature on which we have tested it.

As opposed to combinatorial algorithms, the algorithm presented here modifies all weights of the graph as it proceeds, and only in the end arrives at the cut and the unchanged remaining weights.  

The proposed algorithm is an iterative algorithm, where in each step the second eigenvalue and the associated eigenvector of the Laplacian of a graph with perturbed weights  are computed. In the cardinality- or membership-constrained cases, additionally a linear system with an extended shifted Laplacian is solved in each step. For a large sparse connected graph (where the number of edges leaving any vertex is moderately bounded), these computations can be done in a complexity that is linear in the number of vertices. In the known (unconstrained) minimum cut algorithms, the computational complexity is at least quadratic \cite{SW97}. It is thus conceivable that for large sparse connected graphs, the proposed iterative algorithm can favorably compete with the classical unconstrained minimum cut algorithms. In constrained cases, it appears that the computational complexity is even more favorable in comparison with the existing heuristic combinatorial algorithms as proposed in \cite{BME04}. However, as of now no detailed comparisons of the relative merits of the conceptually and algorithmically fundamentally different approaches have been made.


In Section~\ref{sec:problem} we formulate the Frobenius-norm minimum cut problem and its cardinality- and membership-constrained variants. This is stated as a matrix nearness problem where it is asked how far, with respect to the Frobenius norm, the weight matrix of the given graph is from that of some disconnected graph which should possibly satisfy additional constraints. We give basic notation and recall Fiedler's theorem on graph connectivity. We also formulate an ambiguity problem where it is asked how far the given weight matrix is from the weight matrix of a graph for which the second and third eigenvalues of the graph Laplacian coalesce and for which therefore graph partitioning based on the Fiedler vector (the eigenvector to the second eigenvalue) becomes ambiguous.

In Section~\ref{sec:two-level method} we describe the two-level approach to the unconstrained Frobenius-norm minimum cut problem. This is the central section of the paper, where the basic approach is developed. 

In Section~\ref{sec:cardinality-membership} we extend the approach to the cardinality- and membership-constrained minimum cut problems, and in Section~\ref{sec:ambiguity} we extend it to the ambiguity problem.

In Section~\ref{sec:alg-aspects} we describe algorithmic aspects such as the discretization of the norm- and inequality-constrained gradient flow,  the choice of initial values, and stopping criteria. In particular, since it is known beforehand that the weights of the cut graph are either zero or those of the original graph, the iteration need not be carried out to full convergence.

Section~\ref{sec:num-examples} shows numerical results of the proposed algorithm for some graphs taken from the literature.

\section{Preparations and problem formulation}
\label{sec:problem}

\subsection{The Frobenius-norm minimum cut problem}
Consider a graph with vertex set $\mathcal{V}=\{1,\dots,n \}$ and edge set $\mathcal{E}\subset \mathcal{V}\times\mathcal{V}$. We  assume that the graph is {\it undirected}: with $(i,j)\in\mathcal{E}$, also $(j,i)\in\mathcal{E}$. With the undirected graph we associate {\it weights} $w_{ij}$ for $(i,j)\in\mathcal{E}$, such that
$$
w_{ij}=w_{ji} \ge 0 \quad\hbox{ for all }\ (i,j)\in\mathcal{E}.
$$
The graph is {\it connected} if for all $i,j\in\mathcal{V}$, there is a path $(i_0,i_1),(i_1,i_2),\dots,(i_{\ell-1},i_\ell)\in\mathcal{E}$ of arbitrary length $\ell$, such that $i=i_0$ and $j=i_\ell$ and $w_{i_{k-1},i_k}>0$ for all $k=1,\dots,\ell$.

The problem considered in this paper is the following: Given a connected weighted undirected graph with weights $w_{ij}$, we aim to find a {\it disconnected} weighted undirected graph with the same edge set $\mathcal{E}$ and modified weights $\widehat w_{ij}$ such that 
\begin{equation}\label{dist-2}
\sum_{(i,j)\in\mathcal{E}} (\widehat w_{ij} - w_{ij})^2 \quad\hbox{ is minimized.}
\end{equation}
The solution to this matrix nearness problem is the same as that of finding a cut  $\mathcal{C}$, i.e., a set of edges that yield a disconnected graph when they are removed from $\E$, where
$$
\text{the cut $\mathcal{C}$ is such that }\ \sum_{(i,j)\in\mathcal{C}} w_{ij}^2 \quad\hbox{ is minimized.}
$$
When the weights are replaced by their square roots, so that $w_{ij}$ instead of $w_{ij}^2$ appears in the above sum, this becomes
the classical minimum cut problem, for which algorithms with complexity $O(|\mathcal{V}|^2 \log |\mathcal{V}| + |\mathcal{V}|\cdot |\mathcal{E}|)$
exist; see Stoer \& Wagner \cite{SW97} and references therein.

\subsection{Constrained minimum cut problems}
The above problem will further be considered with additional constraints. In particular, we consider the following cases:
\begin{itemize}
\item {\it Membership constraint:}\/ It is required that a given set of vertices $\mathcal{V}^+\subset\mathcal{V}$ is in one connected component and another given set of vertices $\mathcal{V}^-\subset\mathcal{V}$ is in the other connected component.
\item {\it Cardinality constraint:}\/ It is required that each of the connected components has a prescribed minimum number $\overline n$ of vertices.
\end{itemize}
It is known that cardinality constraints make the problem NP-hard \cite{BEHM07,BME04}.

\subsection{Graph Laplacian and algebraic connectivity}
Setting $w_{ij}=0$ for $(i,j)\notin \mathcal{E}$, we have the symmetric weight matrix
$$
W=(w_{ij}) \in \R^{n\times n}.
$$
The degrees  $d_i = \sum_{j=1}^n w_{ij} $ are collected in the diagonal matrix 
$$
D = \diag(d_i)= \diag(W  \one), \qquad\hbox{where $\one:=(1,\ldots,1)^T \in \R^n$.}
$$
The {\it Laplacian matrix} $L = \mathrm{Lap}(W)$ is defined by
$$
L = D-W, \quad\mbox{ i.e., }\quad   \mathrm{Lap}(W)=\diag(W  \one) - W .
$$
We note that by the Gershgorin circle theorem, all eigenvalues of $L$ are nonnegative, and 
$L\one=0$, so that $\lambda_1=0$ is the smallest  eigenvalue of $L$. Remarkably, the connectivity of the graph is characterized by the second-smallest eigenvalue of $L$.

\begin{theorem}[M. Fiedler \cite{fiedler}] \label{thm:fiedler}
 Let $W \in \R^{n \times n}$ be the weight matrix of an undirected graph and $L$ the corresponding Laplacian matrix. Let $0 = \lambda_1 \le \lambda_2 \le \ldots \le \lambda_n$ be the eigenvalues of $L$. Then, the graph is disconnected if and only if $\lambda_2 = 0$. Moreover, if $0=\lambda_2<\lambda_3$, then the entries of the corresponding eigenvector orthogonal to $\one$ assume only two different values, of different sign, which mark the membership to the two connected components.
\end{theorem}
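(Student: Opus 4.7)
The plan is to base everything on the quadratic-form identity for the Laplacian. For any $x\in\R^n$,
\begin{equation*}
x^\tp L x \,=\, \tfrac{1}{2}\sum_{i,j=1}^n w_{ij}(x_i-x_j)^2,
\end{equation*}
which one verifies by expanding $x^\tp D x$ and $x^\tp W x$ and using symmetry of $W$. This immediately shows that $L$ is positive semidefinite, so $\lambda_1\ge 0$, and combined with $L\one=0$ we get $\lambda_1=0$. Crucially, $x^\tp L x=0$ forces $x_i=x_j$ for every edge $(i,j)\in\mathcal{E}$ with $w_{ij}>0$, i.e.\ $x$ is constant on every connected component of the graph.

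For the first assertion, I would apply the characterization $\lambda_2=0$ iff there exists a nonzero $x\perp\one$ in the null space of $L$. If the graph is connected, any $x$ constant on components is a multiple of $\one$, and orthogonality to $\one$ forces $x=0$, so $\lambda_2>0$. Conversely, if the graph has components $C_1,\dots,C_k$ with $k\ge 2$, then the indicator vectors $\one_{C_1},\dots,\one_{C_k}$ lie in $\ker L$, and a suitable combination such as $x=|C_2|\,\one_{C_1}-|C_1|\,\one_{C_2}$ is a nonzero element of $\ker L$ orthogonal to $\one$, giving $\lambda_2=0$.

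For the second assertion, the key step is to identify $\ker L$ precisely. Any $v\in\ker L$ satisfies $v^\tp L v=0$, hence is constant on each connected component, so $\ker L=\mathrm{span}\{\one_{C_1},\dots,\one_{C_k}\}$; these indicator vectors are linearly independent since the $C_i$ are disjoint, so $\dim\ker L$ equals the number of connected components. The hypothesis $0=\lambda_2<\lambda_3$ gives $\dim\ker L=2$, forcing exactly two components $C_1,C_2$. The eigenvector to $\lambda_2$ orthogonal to $\one$ then has the form $v=\alpha\,\one_{C_1}+\beta\,\one_{C_2}$, and $v^\tp\one=\alpha|C_1|+\beta|C_2|=0$ together with $v\neq 0$ forces $\alpha,\beta$ to be nonzero with opposite signs. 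So $v$ takes exactly two values, of opposite signs, marking membership in $C_1$ vs $C_2$.

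The main obstacle is conceptual rather than computational: one has to see that the combinatorial condition (connectedness) is precisely encoded by the spectral condition through the null space of the quadratic form, and that counting components reduces to counting dimensions of $\ker L$. Once the identity for $x^\tp L x$ is in place, both parts follow by elementary linear algebra, without any use of spectral perturbation theory or more sophisticated tools.
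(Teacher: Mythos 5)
The paper states this result without proof, attributing it to Fiedler \cite{fiedler}, so there is no in-paper argument to compare against; what matters is whether your self-contained proof is sound, and it is. The identity $x^\tp L x = \tfrac12\sum_{i,j} w_{ij}(x_i-x_j)^2$ is verified exactly as you say, and it correctly yields that $\ker L$ consists of the vectors constant on each positive-weight connected component, hence $\ker L=\mathrm{span}\{\one_{C_1},\dots,\one_{C_k}\}$ and $\dim\ker L = k$. Both directions of the first assertion then follow: connectedness forces $\ker L=\mathrm{span}\{\one\}$ so that no nonzero null vector is orthogonal to $\one$ and $\lambda_2>0$ by the variational characterization, while $k\ge 2$ supplies the explicit null vector $|C_2|\one_{C_1}-|C_1|\one_{C_2}\perp\one$. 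For the second assertion, the hypothesis $0=\lambda_2<\lambda_3$ gives $k=2$, and the sign analysis of $\alpha|C_1|+\beta|C_2|=0$ with $(\alpha,\beta)\neq(0,0)$ correctly forces two nonzero values of opposite sign. One point worth making explicit (you use it implicitly) is that the eigenvector ``corresponding'' to $\lambda_2$ orthogonal to $\one$ is well defined up to scaling precisely because the orthogonal complement of $\one$ inside the two-dimensional null space is one-dimensional. This is the standard modern proof of Fiedler's theorem, entirely elementary and consistent with the paper's definition of connectivity through strictly positive weights.
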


Because of this result, the second smallest eigenvalue $\lambda_2$ of $L$ is called {\it algebraic connectivity} of $W$. If $\lambda_2$ is a simple eigenvalue, then the corresponding eigenvector is known as the {\it Fiedler vector}. 

\subsection{An ambiguity problem in graph partitioning}
Based on Theorem~\ref{thm:fiedler}, a common and computationally inexpensive strategy for partitioning a graph is to compute the Fiedler vector and to partition the graph according to the values of its entries. This becomes unreliable when a small perturbation of the weights yields a coalescence of the eigenvalues $\lambda_2$ and $\lambda_3$. It is then interesting to know the distance of the given weight matrix from the set of weight matrices with $\lambda_2=\lambda_3$.

\section{Two-level method for the Frobenius-norm minimum cut problem}
\label{sec:two-level method}

\subsection{Two-level formulation}
Our approach can be summarized as follows:
\begin{enumerate}
\item Given $\eps>0$, we look for a symmetric matrix $E=(e_{ij})\in\R^{n\times n}$ with the same sparsity pattern as $W$ (i.e., $e_{ij}= 0$ if $w_{ij}=0$), of unit Frobenius norm,  with $W+\eps E\ge 0$ (with componentwise inequality)  such that the second smallest eigenvalue of $\mathrm{Lap}(W+\eps E)$ is minimal. The obtained minimizer is denoted by $E(\eps)$.
\item We look for the smallest value of $\eps$ such that the second smallest eigenvalue of 
$\mathrm{Lap}(W+\eps E(\eps))$ equals $0$.
\end{enumerate}

In order to compute $E(\eps)$ for a given $\eps>0$, we make use of a constrained gradient system for the functional
\begin{equation}\label{F-eps}
F_\eps(E) = \lambda_2\bigl( \mathrm{Lap}(W+\eps E) \bigr),
\end{equation}
under the constraints of unit Frobenius norm and $W+\eps E\ge 0$ and the symmetry and the sparsity pattern of~$E$.

In the outer iteration we compute the optimal $\eps$, denoted $\eps^\star$, by a combined Newton-bisection method.

The algorithm computes a partition of the graph as provided by the Fiedler vector corresponding to the weight matrix $W+\eps^\star E(\eps^\star)$. This is not guaranteed to yield a global optimum for the Frobenius-norm minimum cut problem, since the gradient flow might converge only to a local minimum. In any case, it provides an upper bound for the distance problem \eqref{dist-2}.

\subsection{Constrained gradient flow for the functional $F_\eps$}

\subsubsection{Eigenvalue derivatives}
We will use the following standard perturbation result for
eigenvalues; see, e.g., \cite[Section II.1.1]{Kat95}. Here and in the following, we denote $\dot{\phantom{a}}= d/dt$.

\begin{lemma} \label{lem:eigderiv} Consider the differentiable symmetric 
$n\times n$ matrix valued function $C(t)$ for $t$ in a neighborhood of $0$. 
Let $\lambda(t)$ be an eigenvalue of $C(t)$
converging to a simple eigenvalue $\lambda_0$ of $C_0=C(0)$ as $t\goes 0$.  
Let $x_0$ be the associated eigenvector, with $\| x_0 \|_2=1$.
Then $\lambda(t)$ is differentiable near $t=0$ with
$$
   \dot\lambda(0) = x_0^T \dot{C}(0) x_0.
$$
\end{lemma}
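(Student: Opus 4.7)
The plan is to reduce the statement to a short differentiation of the eigenvalue equation, after first securing the differentiability of an associated eigenvector. My first step is to invoke standard analytic perturbation theory (e.g.\ Kato \cite{Kat95}, which the paper already cites): since $\lambda_0$ is a simple eigenvalue of $C_0$, there exist differentiable functions $\lambda(t)$ and $x(t)$ on a neighborhood of $0$ with $C(t) x(t) = \lambda(t) x(t)$, $x(0) = x_0$, and $\|x(t)\|_2 = 1$. This is the only delicate input; everything after it is a two-line calculation. If one prefers to avoid quoting Kato, the same conclusion can be obtained by applying the implicit function theorem to the pair of equations $(C(t) - \lambda I)x = 0$, $x^T x = 1$, whose Jacobian at $(x_0, \lambda_0)$ is nonsingular on the orthogonal complement of $x_0$ precisely because $\lambda_0$ is a simple eigenvalue.

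Once such smooth $\lambda(t), x(t)$ are in hand, I differentiate the identity $C(t) x(t) = \lambda(t) x(t)$ at $t = 0$, obtaining
\begin{equation*}
\dot{C}(0) x_0 + C_0 \dot{x}(0) = \dot\lambda(0) x_0 + \lambda_0 \dot{x}(0).
\end{equation*}
I then left-multiply by $x_0^T$. Using symmetry of $C_0$, one has $x_0^T C_0 = \lambda_0 x_0^T$, so the terms $x_0^T C_0 \dot{x}(0)$ and $\lambda_0 x_0^T \dot{x}(0)$ cancel. Since $\|x_0\|_2 = 1$, the right side reduces to $\dot\lambda(0)$ and the conclusion $\dot\lambda(0) = x_0^T \dot{C}(0) x_0$ follows.

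The main obstacle, such as it is, is the smooth-eigenvector step: for a general eigenvalue crossing, no such smooth selection exists, which is why the simplicity hypothesis on $\lambda_0$ is essential. The rest is formal. I would therefore spend most of the write-up citing or establishing that smoothness, and relegate the differentiation and symmetric-cancellation step to a single displayed computation.
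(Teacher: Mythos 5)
Your proof is correct and is exactly the standard argument that the paper has in mind: the paper does not prove this lemma at all but simply cites Kato \cite[Section II.1.1]{Kat95}, and your write-up (smooth eigenpair from simplicity of $\lambda_0$, differentiate $C(t)x(t)=\lambda(t)x(t)$, left-multiply by $x_0^T$ and cancel using symmetry and $\|x_0\|_2=1$) is the standard derivation behind that citation. No gaps; the only point worth keeping explicit in a write-up is, as you say, that simplicity of $\lambda_0$ is what guarantees the differentiable eigenvector selection.
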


\subsubsection{Gradient of $F_\eps$}
\label{subsec:gradient}

We denote by $\|\cdot\|=\|\cdot\|_F$ the Frobenius norm on $\R^{n\times n}$ and by $\langle X,Y \rangle = \mathrm{trace}(X^T Y)$ the corresponding inner product.

We return to the situation of the previous section. For a set of edges $\mathcal{E}$, we define $P_\mathcal{E}$ as the orthogonal projection from $\R^{n\times n}$ onto the sparsity pattern determined by~$\mathcal{E}$: for $A=(a_{ij})$,
\[ 
P_\mathcal{E}(A)\big|_{ij} := \begin{cases}
	a_{ij}\,, &\text{if } (i,j)\in\mathcal{E} \,,\\
	0\,, &\text{otherwise.}\end{cases}
\]

For a fixed given weight matrix $W$ and for $\eps>0$, we call a matrix $E=(e_{ij})\in\R^{n\times n}$ {\it $\eps$-feasible} if the following conditions are satisfied:
\begin{itemize}
\item[(i)] $E$ is of unit Frobenius norm.
\item[(ii)] $E$ is symmetric.
\item[(iii)] $E=P_\E(E)$.
\item[(iv)] $W+\eps E \ge 0$.
\end{itemize}
Consider now a regular path $E(t)$ of $\eps$-feasible matrices, and denote the corresponding Laplacian matrix by $L(t)= \Lap(W+\eps E(t))$ and by $\lambda_2(t)$ the second smallest eigenvalue of $L(t)$.
Lemma \ref{lem:eigderiv} applied to the Laplacian matrix $L(t)$ yields (omitting the argument $t$)
\begin{equation}\label{milestone1}
\dot\lambda_2 = x^T \dot{L} x = \langle x x^T, \dot{L} \rangle,
\end{equation}
where  $x(t)$ is a corresponding eigenvector of unit Euclidean norm. 
Next we rearrange \eqref{milestone1} to an equation $\dot\lambda_2 = \eps \langle G_\eps(E),\dot{E} \rangle $ with an appropriate matrix-valued function~$G_\eps$, which is the gradient of $F_\eps$ in the space of symmetric matrices with sparsity pattern~$\E$.
 In the following, $\sym(A)=\tfrac12(A+A^T)$ denotes the symmetric part of a quadratic matrix~$A$, and we write $x^2=(x_i^2)\in\R^n$ for the  vector of squares of the entries of $x=(x_i)\in\R^n$.
\begin{lemma}\label{lem:a-dot} In the above situation we have
\begin{align}\label{G-eps}
	\dot\lambda_2 = \eps \langle  G_\eps(E) , \dot{E} \rangle, \quad\hbox{ where }\quad
	G_\eps(E)=P_{\mathcal{E}}(\sym(x^2 \one ^T)-x x^T)
\end{align}
is symmetric and has the sparsity pattern determined by the set of edges $\mathcal{E}$. 
\end{lemma}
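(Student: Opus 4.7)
The plan is to compute $\dot L$ explicitly, substitute into the eigenvalue derivative formula from Lemma~\ref{lem:eigderiv}, then rewrite the two resulting scalar expressions as Frobenius inner products with $\dot E$, and finally exploit the symmetry of $\dot E$ and its sparsity pattern to reshape the first factor into the claimed form.

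First I would differentiate $L(t) = \Lap(W + \eps E(t)) = \diag\bigl((W+\eps E)\one\bigr) - (W+\eps E)$ with respect to $t$. Since $W$ is constant, this gives $\dot L = \eps\bigl(\diag(\dot E\,\one) - \dot E\bigr)$. Applying Lemma~\ref{lem:eigderiv} to the symmetric path $L(t)$ with unit eigenvector $x=x(t)$ associated with the simple eigenvalue $\lambda_2$ then yields
\begin{equation*}
\dot\lambda_2 = x^T \dot L\, x = \eps\bigl( x^T \diag(\dot E\,\one)\, x - x^T \dot E\, x \bigr).
\end{equation*}

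Next I would recognize each of the two terms as a Frobenius inner product with $\dot E$. For the second term this is immediate: $x^T \dot E\, x = \langle xx^T, \dot E\rangle$. For the first term, writing the diagonal entries componentwise,
\begin{equation*}
x^T \diag(\dot E\,\one)\, x = \sum_i x_i^2 \sum_j \dot e_{ij} = \sum_{i,j} (x^2\one^T)_{ij}\,\dot e_{ij} = \langle x^2\one^T,\dot E\rangle.
\end{equation*}
Combining these gives $\dot\lambda_2 = \eps\,\langle x^2\one^T - xx^T, \dot E\rangle$.

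Finally, since $E(t)$ is symmetric for every $t$ and has sparsity pattern $\mathcal{E}$, the same is true of $\dot E$. Hence in the inner product above I can replace the left factor by its symmetric part and then project onto the sparsity pattern without changing the value: using $\langle A,\dot E\rangle = \langle \sym(A),\dot E\rangle = \langle P_{\mathcal E}(\sym(A)),\dot E\rangle$ and noting that $xx^T$ is already symmetric, I obtain
\begin{equation*}
\dot\lambda_2 = \eps\,\bigl\langle P_{\mathcal E}\bigl(\sym(x^2\one^T) - xx^T\bigr),\, \dot E\bigr\rangle = \eps\,\langle G_\eps(E),\dot E\rangle,
\end{equation*}
which is the claim. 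The symmetry and sparsity-pattern properties of $G_\eps(E)$ are then built in by construction through the operators $\sym$ and $P_{\mathcal E}$. There is no real obstacle here; the only point requiring slight care is the bookkeeping that turns the scalar $x^T\diag(\dot E\,\one)\,x$ into a Frobenius inner product with the rank-one factor $x^2\one^T$, and the observation that symmetrization/projection on the gradient side is legitimate precisely because $\dot E$ is itself symmetric and supported on $\mathcal{E}$.
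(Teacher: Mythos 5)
Your proof is correct and follows essentially the same route as the paper: differentiate the Laplacian to get $\dot L = \eps(\diag(\dot E\,\one)-\dot E)$, apply Lemma~\ref{lem:eigderiv}, rewrite $x^T\diag(\dot E\,\one)\,x$ as $\langle x^2\one^T,\dot E\rangle$, and use the symmetry and sparsity of $\dot E$ to justify the symmetrization and projection of the gradient. No gaps.
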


\begin{proof} We note that
\begin{align}\label{milestone2}
\dot L =	\Lap\left( \frac{d}{dt} \bigl( W+\eps E(t) \bigr)\right) = \eps\, \Lap( \dot{E})  = \eps ( \diag(\dot{E}\,\one ) - \dot{E} ) \,.
\end{align}
Combining \eqref{milestone1} and \eqref{milestone2}, we obtain
\begin{align}\label{milestone3}
	\dot\lambda_2 = \eps \left( \langle x x^T, \diag(\dot{E}\,\one )\rangle - \langle x x^T, \dot{E} \rangle \right) .
\end{align}
The second term is already in the desired form. We obtain for the first term
\begin{align} \label{xx}
	\langle x x^T , \diag(\dot{E}\,\one ) \rangle &=  \sum_{i=1}^n x_i^2 (\dot{E}\,\one )_i = \sum_{i=1}^n \sum_{j=1}^n x_i^2 \one _j \dot{e}_{ij} =\langle x^2 \one ^T, \dot{E} \rangle \,.
\end{align}
This yields
\begin{align*}
	\dot\lambda_2 =  \eps \langle  x^2 \one ^T - x x^T, \dot{E} \rangle \,.
\end{align*}
Since $\dot{E}$ and $x x^T$ are symmetric, this can be rewritten as
\begin{align*}\label{milestone6}
	\dot\lambda_2 = \eps\langle  \sym(x^2 \one ^T) - x x^T , \dot{E} \rangle \,.
\end{align*}
We then have
$$
	\dot\lambda_2 = \eps \langle  G_\eps(E) , \dot{E} \rangle \quad\hbox{ with }\quad
	G_\eps(E)=P_{\mathcal{E}}(\sym(x^2 \one ^T)-x x^T).
$$
This is in the desired form: $G_\eps(E)$ is symmetric and has the  sparsity pattern $\mathcal{E}$. 
\end{proof}

\subsubsection{Admissible directions}
Since $E(t)$ is of unit Frobenius norm by condition (i), we have
\[ 
0 = \frac12\, \frac{d}{dt} \lVert E(t) \rVert^2 =  \langle E(t), \dot E(t) \rangle .
\]
Condition (iv) requires that $\dot{e}_{ij}  \ge 0$ for all $(i,j) \in \mathcal{E}_0$, where $\mathcal{E}_0=\E_0(\eps E)$ is the set of cut edges defined by
\[ 
\mathcal{E}_0 := \{(i,j) \in \mathcal{E}:\, w_{ij} + \varepsilon e_{ij} = 0 \}.
\]
Conditions (ii) and (iii) are satisfied if the same holds for $\dot{E}$. These four conditions are in fact also sufficient for a matrix to be the time derivative of a path of $\eps$-feasible matrices.
Hence, for every $\eps$-feasible matrix $E$, a matrix $Z=(z_{ij})\in\R^{n\times n}$ is the derivative at $t=0$ of some path of $\eps$-feasible matrices starting at $E$ if and only if the following four conditions are satisfied:
\begin{itemize}
\item[(i')] $\langle E,Z\rangle=0$.
\item[(ii')] $Z$ is symmetric.
\item[(iii')] $Z=P_\E(Z)$.
\item[(iv')] $P_{\E_0}(Z)\ge 0$.
\end{itemize}
Condition (iv') says that $z_{ij} \ge 0 \text{ for all }(i,j) \in \mathcal{E}_0$.

\subsubsection{Admissible direction of steepest descent}
To determine the admissible direction $\dot E$ of steepest descent from $E$, we  therefore consider the following optimization problem for $G=G_\eps(E)$:
\begin{align}\label{opt_problem_1}
\min_{Z} \langle G,Z \rangle \quad\text{subject to (i')--(iv') and $\langle Z,Z \rangle = 1$.} 
\end{align}
The additional constraint  $\lVert Z \rVert = 1$ just normalizes the descent direction. 
Problem \eqref{opt_problem_1} has a quadratic constraint. We now formulate a quadratic optimization problem with linear constraints, which is equivalent in the sense that it yields the same descent direction, provided that a strict descent direction exists, i.e., satisfying $\langle G,Z \rangle<0$ and the constraints (i')--(iv'). This is based on the fact that when  $\langle G, Z \rangle < 0$, there exists a scaling factor $\alpha>0$ such that $\langle G, \alpha Z \rangle = -1$. Consider the following problem:
\begin{align}\label{opt_problem_2}
	\min_{Z} \langle Z,Z \rangle \quad \text{subject to (i')-(iv') and $\langle G, Z \rangle = -1$.} 
\end{align}
Both optimization problems yield the same Karush--Kuhn--Tucker (KKT) conditions (apart from the normalization). Since the objective function $\langle Z,Z \rangle$ of problem \eqref{opt_problem_2} is convex and all constraints are linear, the KKT conditions are not only necessary but also sufficient conditions (\cite[Theorem 9.4.1]{F13}),  that is, a KKT point is already a solution of the optimization problem.

The solution of \eqref{opt_problem_2} satisfies the KKT conditions 
\begin{subequations}\label{KKT}
\begin{align}
	&
	   Z = - G - \kappa E + \sum_{(i,j) \in \mathcal{E}_0} \mu_{ij} e_i e_j^T \,, \\
	&\mu_{ij} z_{ij} = 0 \text{ for all }(i,j) \in \mathcal{E}_0 \,, \\
	&\mu_{ij} \ge 0 \text{ for all }(i,j) \in \mathcal{E}_0 \,.
\end{align}
\end{subequations}
In addition, there are conditions (i')--(iv'). It can be shown (see \cite[Lemma 3.2.1]{edelmann-master}) that the symmetry and sparsity conditions (ii') and (iii') need not be imposed, but are consequences of conditions (ii) and (iii) on $E$ and the corresponding properties of $G=G_\eps(E)$.

\subsubsection{Constrained gradient flow}
The gradient flow of $F_\eps$ under the constraints (i)--(iv) is the system of differential equations
\begin{equation}\label{cgf}
\dot E(t) = Z(t),
\end{equation}
where $Z(t)$ solves the KKT system \eqref{KKT} with $G=G_\eps(E(t))$ under the constraints (i')--(iv') with the set of edges $\E_0(t)=\E_0(\eps E(t))$. 

\begin{lemma} On an interval where $\E_0(t)$ does not change, the gradient system becomes, with $\Pact =P_{\E \setminus \E_0}$ and omitting the ubiquitous argument~$t$,
\begin{equation}\label{Pact-ode}
\dot E = - \Pact  G_\eps(E) - \kappa \Pact  E \quad\hbox{ with }\quad
\kappa = \frac{\langle - G_\eps(E), \Pact  E \rangle }{\| \Pact  E\|^2}.
\end{equation}
\end{lemma}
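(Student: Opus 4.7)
The plan is to read off $\dot E=Z$ from the KKT system \eqref{KKT} by combining two ingredients: the assumed invariance of the active set $\mathcal{E}_0(t)$ on the interval, and the norm-preservation constraint (i'). Before using the KKT relation, I would first verify that $Z$ is supported in $\mathcal{E}\setminus\mathcal{E}_0$, i.e.\ $Z = P^+Z$. For any $(i,j)\in\mathcal{E}_0$, the equality $w_{ij}+\eps e_{ij}(t)=0$ holds throughout the interval, since $(i,j)$ remains active by hypothesis. Differentiating in $t$ gives $\eps\dot e_{ij}(t)=0$, so $z_{ij}(t)=0$ on $\mathcal{E}_0$; combined with the sparsity condition (iii'), this shows $Z = P^+ Z$.

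Next I would apply the projection $P^+$ to the first KKT equation $Z = -G_\eps(E) - \kappa E + \sum_{(i,j)\in\mathcal{E}_0}\mu_{ij}\, e_i e_j^T$. Each rank-one matrix $e_i e_j^T$ with $(i,j)\in\mathcal{E}_0$ is supported in $\mathcal{E}_0$ and is therefore annihilated by $P^+$, so the complementarity multipliers $\mu_{ij}$ drop out entirely. Combined with $P^+Z = Z$ from the previous step, this leaves
\[ \dot E = Z = -P^+ G_\eps(E) - \kappa\, P^+ E, \]
which is the stated form of the vector field.

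Finally, I would pin down $\kappa$ by using (i'): differentiating $\langle E,E\rangle=1$ yields $\langle E,\dot E\rangle=0$, and since $Z=P^+ Z$ and $P^+$ is an orthogonal projection, $\langle E,Z\rangle=\langle P^+ E, Z\rangle$. Substituting the expression for $Z$ and using $\langle P^+ E, P^+ G_\eps(E)\rangle = \langle P^+ E, G_\eps(E)\rangle$ gives
\[ 0 = \langle P^+ E,\, -P^+ G_\eps(E) - \kappa P^+ E\rangle = \langle -G_\eps(E),\, P^+ E\rangle - \kappa\,\lVert P^+ E\rVert^2, \]
which solves for $\kappa$ as claimed (assuming $P^+ E\neq 0$). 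The only nontrivial step is the first one: the KKT complementarity $\mu_{ij}z_{ij}=0$ on its own merely rules out both quantities being simultaneously positive and would permit $\mu_{ij}=0$ with $z_{ij}>0$; what actually forces $z_{ij}=0$ on $\mathcal{E}_0$ is the hypothesis that the active set is locally constant, which promotes the one-sided inequality (iv) to a differentiable equality on $\mathcal{E}_0$.
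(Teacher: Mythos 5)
Your proposal is correct and follows essentially the same route as the paper: the multipliers on $\E_0$ combined with the constancy of the active set reduce the KKT equation to $\dot E = \Pact(-G_\eps(E)-\kappa E)$, and $\kappa$ is then read off from $\langle E,\dot E\rangle=0$ exactly as in the paper's computation. Your closing remark correctly identifies that it is the local constancy of $\E_0(t)$, not complementarity alone, that forces $\dot e_{ij}=0$ on $\E_0$ --- a point the paper states only tersely.
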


\begin{proof} The positive Lagrange multipliers $\mu_{ij}>0$ just have the role to ensure that $\dot e_{ij}=0$.  With $G=G_\eps(E)$, the gradient system therefore reads
\begin{equation}\label{E-ode}
\dot E = \Pact (-G - \kappa E),
\end{equation}
where $\kappa$ is determined from the constraint $\langle E,\dot E \rangle =0$. We then have
$$
0=\langle E,\dot E \rangle = \langle E,\Pact (-G - \kappa E) \rangle=
- \langle \Pact  E,G \rangle -  \kappa\langle \Pact  E,\Pact  E \rangle,
$$
and the result follows.
\end{proof}

In a numerical solution of the gradient system, we thus have to monitor the sets of edges
where $w_{ij}+\eps e_{ij}=0$ and  among them further those edges where the sign of $-g_{ij}-\kappa e_{ij}$ changes. When the active set is changed, then also $\kappa$ changes in a discontinuous way. 
Let $\kappa_-$ and $\kappa_+$ be the values of $\kappa$ before and after the event of discontinuity, respectively. Then one has generically $g_{ij}+\kappa_- e_{ij}>0$ after the event for the critical edge $(i,j)$, but the sign of 
$g_{ij}+\kappa_+ e_{ij}$ may be positive or negative. In the first case, $(i,j)$ leaves $\E_0$. In the latter case, only a generalized solution in the Filippov sense exists, which keeps $(i,j)\in\E_0$, i.e., $w_{ij}+\eps e_{ij}=0$. This is enforced until $g_{ij}+\kappa_+ e_{ij}$ changes sign. From a practical perspective, it appears reasonable just to keep $(i,j)\in\E_0$ for all future time once it has entered $\E_0$, which means that a cut of an edge is made irreversible.

\subsubsection{Monotonicity and stationary points}

The following monotonicity result follows directly from the construction of the gradient system.

\begin{theorem}
Let $E(t)$ of unit Frobenius norm satisfy the differential equation \eqref{E-ode} with $G_\eps(E)$ of \eqref{G-eps}. Then, the second smallest eigenvalue $\lambda_2(t)$ of the Laplacian matrix $\Lap(W+\eps E(t))$ decreases monotonically with $t$:
$
\dot \lambda_2(t) \le 0.
$
\end{theorem}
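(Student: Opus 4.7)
The plan is to reduce the inequality $\dot\lambda_2\le 0$ to a Cauchy--Schwarz estimate on a projected inner product, using Lemma~\ref{lem:a-dot} to express $\dot\lambda_2$ as a Frobenius inner product involving $G_\eps(E)$ and then substituting the explicit form of $\dot E$ from \eqref{Pact-ode}.

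First, by Lemma~\ref{lem:a-dot}, with $G:=G_\eps(E(t))$,
\begin{equation*}
\dot\lambda_2 \;=\; \eps\,\langle G,\dot E\rangle,
\end{equation*}
so since $\eps>0$ it suffices to show $\langle G,\dot E\rangle\le 0$. I would substitute $\dot E = -\Pact G - \kappa\,\Pact E$ from \eqref{Pact-ode} and use that $\Pact = P_{\E\setminus\E_0}$ is an orthogonal projection with respect to the Frobenius inner product, hence self-adjoint and idempotent. Consequently $\langle G,\Pact A\rangle = \langle \Pact G,\Pact A\rangle$ for any $A$, which gives
\begin{equation*}
\langle G,\dot E\rangle \;=\; -\|\Pact G\|^2 \;-\; \kappa\,\langle \Pact G,\Pact E\rangle.
\end{equation*}

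Next, I would insert the value $\kappa = \langle -G,\Pact E\rangle/\|\Pact E\|^2$ from \eqref{Pact-ode} and apply self-adjointness once more to get $\kappa\,\langle \Pact G,\Pact E\rangle = -\langle \Pact G,\Pact E\rangle^2/\|\Pact E\|^2$, so that
\begin{equation*}
\langle G,\dot E\rangle \;=\; -\|\Pact G\|^2 \;+\; \frac{\langle \Pact G,\Pact E\rangle^2}{\|\Pact E\|^2}.
\end{equation*}
The decisive step --- and essentially the only one requiring any thought --- is to recognise that this expression is precisely what the Cauchy--Schwarz inequality $\langle \Pact G,\Pact E\rangle^2 \le \|\Pact G\|^2\,\|\Pact E\|^2$ controls, yielding $\langle G,\dot E\rangle\le 0$ and hence $\dot\lambda_2\le 0$.

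The only subtlety I anticipate, rather than a genuine obstacle, is that the ODE form \eqref{Pact-ode} is valid only on intervals where the active set $\E_0(t)$ remains constant, so strictly the computation above establishes $\dot\lambda_2\le 0$ piecewise. Global monotonicity of $\lambda_2(t)$ then follows from continuity of $\lambda_2$ as a function of the continuously varying symmetric matrix $\mathrm{Lap}(W+\eps E(t))$ across the finitely many events where $\E_0$ jumps. The degenerate case $\Pact E(t)=0$, in which $\kappa$ is undefined, would require a separate short remark but does not affect the monotonicity conclusion.
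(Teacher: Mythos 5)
Your proposal is correct and follows essentially the same route as the paper: the paper states that the monotonicity "follows directly from the construction of the gradient system," and the identical computation $\tfrac1\eps\dot\lambda_2=\langle G,\dot E\rangle=-\|\Pact G\|^2+\langle\Pact G,\Pact E\rangle^2/\|\Pact E\|^2\le 0$ via Cauchy--Schwarz appears verbatim in its proof of Theorem~\ref{thm:stat}. Your added remarks on the piecewise validity across changes of $\E_0$ and the degenerate case $\Pact E=0$ are sensible refinements that the paper leaves implicit.
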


Equilibrium points of \eqref{E-ode} are characterized as follows.

\begin{theorem} \label{thm:stat}
The following statements are equivalent along solutions of \eqref{E-ode}:
\begin{enumerate}
\item $\dot \lambda_2 =0$.
\item $\dot E =0$.
\item $\Pact E$ is a real multiple of $\Pact G_\eps(E)$.
\end{enumerate}
\end{theorem}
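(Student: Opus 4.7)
My plan is to substitute the explicit form of the gradient flow \eqref{Pact-ode} into the identity $\dot\lambda_2 = \eps\langle G_\eps(E),\dot E\rangle$ from Lemma~\ref{lem:a-dot}, and then recognize the resulting right-hand side as a Cauchy--Schwarz expression whose equality case pinpoints statement~(3).

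Abbreviating $G:=G_\eps(E)$, $a:=\Pact G$, and $b:=\Pact E$, the ODE reads $\dot E = -a - \kappa b$ with $\kappa = -\langle a,b\rangle/\|b\|^2$. Since $\dot E$ lies in the range of the orthogonal projection $\Pact$ and $\Pact$ is self-adjoint, I can replace $G$ by $a$ in the inner product with $\dot E$, yielding
\[
\dot\lambda_2 \;=\; \eps\langle a,\dot E\rangle \;=\; -\eps\|a\|^2 - \eps\kappa\langle a,b\rangle \;=\; -\eps\left(\|a\|^2 - \frac{\langle a,b\rangle^2}{\|b\|^2}\right).
\]
By the Cauchy--Schwarz inequality the bracketed quantity is non-negative and vanishes if and only if $a$ and $b$ are linearly dependent. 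This immediately gives $(1)\Leftrightarrow(3)$. For $(2)\Leftrightarrow(3)$, if $b = \gamma a$ with $\gamma\neq 0$, then the defining formula for $\kappa$ evaluates to $\kappa = -1/\gamma$ and hence $\dot E = -a - (-1/\gamma)(\gamma a) = 0$; conversely, $\dot E = 0$ forces $a = -\kappa b$, so $a$ and $b$ are proportional.

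The main subtlety lies in the degenerate boundary cases rather than in any hard calculation. The case $b = 0$ is tacitly excluded because the Lagrange multiplier $\kappa$ would be undefined; if $a = 0$, then $\kappa = 0$, $\dot E = 0$ and $\dot\lambda_2 = 0$ all hold simultaneously, so the three statements still coincide under the natural reading of~(3) as linear dependence (equivalently, $a = 0\cdot b$). Once these boundary situations are addressed, the theorem is essentially a one-line consequence of Cauchy--Schwarz applied to the vectors $\Pact E$ and $\Pact G_\eps(E)$ in the Frobenius inner product.
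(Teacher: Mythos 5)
Your proposal is correct and follows essentially the same route as the paper: substitute the explicit form of the projected gradient flow into $\dot\lambda_2=\eps\langle G_\eps(E),\dot E\rangle$, obtain $\frac1\eps\dot\lambda_2=-\|\Pact G\|^2+\langle \Pact G,\Pact E\rangle^2/\|\Pact E\|^2$, and invoke the equality case of Cauchy--Schwarz. Your explicit treatment of the $(2)\Leftrightarrow(3)$ step and of the degenerate cases $\Pact E=0$ and $\Pact G=0$ is a welcome addition but does not change the argument.
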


\begin{proof} Using Lemma~\ref{lem:a-dot} and \eqref{E-ode} we obtain, with $G=G_\eps(E)$,
$$
 \frac1\eps\, \dot\lambda_2 = \langle G, \dot E \rangle 
 = \langle G, - \Pact G - \kappa \Pact E \rangle
 = - \| \Pact G \|^2 + 
 \frac{\langle \Pact G, \Pact E \rangle^2}{\|\Pact E\|^2}.
$$
With the strong form of the Cauchy--Schwarz inequality, the result follows.
\end{proof}

\subsection{Newton-bisection outer iteration}
Let $E(\eps)$ denote the minimizer of the functional $F_\eps$.
In general we expect that for a given perturbation size $\eps<\eps^\star$,  the eigenvalue ${\lambda_2 (W+\eps E(\eps))> 0}$
is simple. 
%
If so, then $f(\eps)=F_\eps(E(\eps))$ is a piecewise smooth function of $\eps$ and we 
can exploit its regularity to obtain a fast iterative method to converge to
$\eps^\star$ from the left.
Otherwise we can use a bisection technique  to approach $\eps^\star$.

The following result provides an inexpensive formula for the computation
of the derivative of $f(\eps)=F_\eps(E(\eps))$, which will be useful in
the construction of the outer iteration of the method. 

\begin{assumption}
We assume that the second smallest eigenvalue  of $\Lap(W + \eps E(\eps))$  
is \emph{simple}. Moreover,  $E(\eps)$ is assumed to be  a smooth function of $\eps$ in some interval, and the set of zero-weight edges $\E_0$ related to $E(\eps)$ is independent of $\eps$ in the interval. 
\label{assumpt}
\end{assumption}

We denote again $\Pact =P_{\E\setminus\E_0}$. We then have the following result.

\begin{lemma}
\label{lem:der}
Under Assumption~{\rm \ref{assumpt}}, the function $f(\eps)=F_\eps(E(\eps))$ is differentiable and its derivative equals (with ${\phantom{a}'}= d/d\eps$)
\begin{equation}
f'(\eps) =  - \| \Pact  G_\eps(E(\eps)) \| \; \| \Pact   E(\eps) \| -
\frac1{\eps^2}\, \frac{\| \Pact  G_\eps(E(\eps)) \| }{ \| \Pact   E(\eps) \|}\, 
\| P_{\E_0}W\|^2.
\label{eq:derFdeps}
\end{equation}
\end{lemma}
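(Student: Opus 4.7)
The plan is to compute $f'(\eps)$ directly by differentiating $\lambda_2(\Lap(W+\eps E(\eps)))$ with respect to $\eps$ and then simplifying the resulting expression using the active-set and unit-norm constraints on $E(\eps)$. Since $\lambda_2$ is simple and $E(\eps)$ is smooth on the given interval, Lemma~\ref{lem:eigderiv} applies directly. With $x$ the associated unit Fiedler vector and using $\frac{d}{d\eps}\Lap(W+\eps E(\eps)) = \Lap(E(\eps) + \eps E'(\eps))$ by linearity of $\Lap$, I obtain $f'(\eps) = \langle x x^{\tp}, \Lap(E(\eps) + \eps E'(\eps)) \rangle$. Re-running the algebraic manipulation from the proof of Lemma~\ref{lem:a-dot} (which uses only symmetry of its argument) converts this into $f'(\eps) = \langle G_\eps(E(\eps)), E(\eps) + \eps E'(\eps) \rangle$, once I observe that $E(\eps)$ and $E'(\eps)$ both lie in the sparsity pattern $\E$.

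The next step is to split the inner product using the decomposition $\E = \E_0 \cup (\E \setminus \E_0)$. On $\E_0$ the identity $w_{ij} + \eps e_{ij}(\eps) = 0$ holds as an $\eps$-independent equation (by Assumption~\ref{assumpt} the active set is fixed), and differentiating it gives $e_{ij}'(\eps) = -e_{ij}(\eps)/\eps$. Consequently $P_{\E_0}(E + \eps E') = 0$, which collapses the $\E_0$-contribution entirely, leaving $f'(\eps) = \langle \Pact G_\eps(E), \Pact E \rangle + \eps \langle \Pact G_\eps(E), \Pact E' \rangle$.

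Then I would invoke the stationarity of $E(\eps)$ as a minimizer of $F_\eps$. By Theorem~\ref{thm:stat} and the KKT system \eqref{KKT} at $\dot E = 0$, one has $\Pact G_\eps(E) = -\kappa \Pact E$; the minimizing (not maximizing) character of the stationary point pins down $\kappa = \|\Pact G_\eps(E)\|/\|\Pact E\| > 0$. The first term then equals $-\|\Pact G_\eps(E)\|\,\|\Pact E\|$. For the second term I would use the unit-norm constraint $\|E(\eps)\|^2 = 1$ to conclude $\langle E, E' \rangle = 0$, hence $\langle \Pact E, \Pact E' \rangle = -\langle P_{\E_0} E, P_{\E_0} E' \rangle$. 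A short computation with $e_{ij} = -w_{ij}/\eps$ and $e_{ij}' = w_{ij}/\eps^2$ on $\E_0$ gives $\langle P_{\E_0} E, P_{\E_0} E' \rangle = -\|P_{\E_0} W\|^2/\eps^3$. Substituting produces the two terms of the announced formula.

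The main obstacle I anticipate is fixing the sign of $\kappa$ and recognizing the cancellation on $\E_0$: once the active constraints are differentiated, the $P_{\E_0}$-part of the inner product vanishes identically, which is what makes the final expression depend on $\|P_{\E_0}W\|$ only through the norm-constraint bookkeeping. Everything else is straightforward manipulation, but the signs are crucial because they determine that $f'(\eps)$ is strictly negative (consistent with $f$ decreasing toward the crossing at $\eps^\star$), and they only come out correctly after identifying $\Pact G_\eps(E)$ as anti-parallel to $\Pact E$ with magnitude ratio $\|\Pact G_\eps(E)\|/\|\Pact E\|$.
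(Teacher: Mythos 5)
Your argument is correct and follows essentially the same route as the paper's proof: differentiate $\lambda_2$ via Lemma~\ref{lem:eigderiv}, note that the $\E_0$-part of $E+\eps E'$ vanishes because the active constraints $w_{ij}+\eps e_{ij}(\eps)=0$ are $\eps$-independent, compute $\langle \Pact E,\Pact E'\rangle=\eps^{-3}\|P_{\E_0}W\|^2$ from the norm conservation, and use the stationarity relation $\Pact G_\eps(E)=-\kappa\,\Pact E$ of Theorem~\ref{thm:stat}. The only difference is cosmetic: you fix the sign by asserting that the minimizing character forces $\kappa>0$, whereas the paper carries a $\pm$ through the computation and resolves it at the end from $f'(\eps)\le 0$ --- both appeals are equally informal, since the first-order conditions alone give only parallelism of $\Pact G_\eps(E)$ and $\Pact E$, not the sign of the multiplier.
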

\medskip
\begin{proof} Under Assumption~{\rm \ref{assumpt}}, the projection $\Pact $ remains constant near $\eps$. This means that we are effectively working on a reduced set $\widehat\E=\E\setminus\E_0$ of edges. We set $\Gact(\eps)=\Pact  G_\eps(E(\eps))$ and decompose 
$$
\eps E(\eps)=\eps \Pact  E(\eps)+R, \quad\hbox{where}\quad 
R=(I-\Pact )\eps E(\eps)=-(I-\Pact )W=-P_{\E_0}W,
$$ 
since $w_{ij}+\eps e_{ij}(\eps)=0$ for all $(i,j)\in \E_0$. In particular, $R$ is independent of~$\eps$.
Differentiating $f(\eps)=F_\eps\bigl( E(\eps) \bigr)$ with respect to $\eps$ we obtain
\begin{eqnarray}
\hskip -9mm
f'(\eps) =\bigl\langle G_\eps(E(\eps)), \Pact  E(\eps)+\eps \Pact  E'(\eps) \bigr\rangle
=\bigl\langle \Gact(\eps), \Pact  E(\eps)+\eps \Pact  E'(\eps) \bigr\rangle.
\label{eq:dFdeps}
\end{eqnarray}
The  conservation of $\|  E(\eps) \|=1 $ and of $\| R \|$ for all  $\eps$  implies
$$
\langle \Pact  E(\eps), \Pact  E'(\eps)\rangle = \frac{d}{d\eps}\,\tfrac12\,\| \Pact  E(\eps) \|^2 =
\frac{d}{d\eps}\,\tfrac12\, (1-\eps^{-2}\|R\|^2) =\eps^{-3}\|R\|^2.
$$
Now we use the property of minimizers as stated by Theorem~\ref{thm:stat},
$$
\frac{\Gact(\eps)}{\|\Gact(\eps)\|} = \pm \frac{\Pact  E(\eps)}{\|\Pact  E(\eps)\|},
$$
which gives us
$$
\bigl\langle \Gact(\eps), \Pact  E(\eps) \bigr\rangle = \pm \|\Gact(\eps)\| \; \| \Pact  E(\eps)\|
$$
and
\begin{align*}
\bigl\langle \Gact(\eps), \eps \Pact  E'(\eps) \bigr\rangle &=
\pm \eps \frac{\| \Gact (\eps) \| }{ \| \Pact   E(\eps) \|}\,
\langle \Pact  E(\eps), \Pact  E'(\eps)\rangle 
\\
&= \pm \eps^{-2} \frac{\| \Gact (\eps) \| }{ \| \Pact   E(\eps) \|}\, \|R\|^2.
\end{align*}
From
\eqref{eq:dFdeps} we thus obtain the stated formula, 
since $f'(\eps)\le 0$.
\end{proof}


For $\eps = \eps_k < \eps^\star$, we make use of the standard Newton iteration
\begin{equation}
\eps_{k+1} = \eps_k - \frac{f(\eps_k)}{f'(\eps_k)},
\label{eq:Newton}
\end{equation}
In a practical algorithm it is useful to couple the Newton iteration \eqref{eq:Newton} with
a bisection technique. To do this we adopt a tolerance tol which allows us to distinguish
whether $\eps < \eps^\star$, in which case we may use the derivative formula and
perform the Newton step, or $\eps > \eps^\star$, so that we have to make use of bisection.
The method is formulated in Algorithm \ref{alg_dist}.

\IncMargin{1em}
\begin{algorithm}
\DontPrintSemicolon
\KwData{Matrix $W$ is given, $k_{\max}$ (max number of iterations), 
tol (tolerance)\; $\eps_0$, $\eps_{\rm lb}$ and $\eps_{\rm ub}$ (starting values for the lower and upper
bounds for $\eps^\star$)}
\KwResult{$\eps^\star$ (upper bound for the distance), $E(\eps^\star)$}
\Begin{
\nl Compute $E(\eps_0)$ by the inner iteration\;
\nl Set $k=0$\;
\While{$k \le k_{\max}$}{
\eIf{$f(\eps_k) < {\rm tol}$}
{Set $ \eps_{\rm ub} = \min(\eps_{\rm ub},\eps_k)$\;
Set $\eps_{k+1} = (\eps_{\rm lb} + \eps_{\rm ub})/2$ \ (bisection step)}
{Set $ \eps_{\rm lb} = \max(\eps_{\rm lb},\eps_k)$\;
\nl Compute $f(\eps_k)$ and $f'({\eps_k})$\;
\nl Compute $\eps_{k+1} = \displaystyle{\eps_k - \frac{f(\eps_k)}{f'(\eps_k)}}$ \ (Newton step)}
\If{$\eps_{k+1} \not\in (\eps_{\rm lb},\eps_{\rm ub})$}{Set $\eps_{k+1} = (\eps_{\rm lb} + \eps_{\rm ub})/2$}
\eIf{$k=k_{\max}$ {\rm \bf or} $\eps_{\rm ub}-\eps_{\rm lb} < {\rm tol}$}
{Return $\eps_{k+1}$ {\rm \bf and} the interval $[\eps_{\rm lb},\eps_{\rm ub}]$\; {\rm \bf Stop}}
{Set $k=k+1$}
\nl Compute $E(\eps_k)$ by the inner iteration\;
}
}
\nl Return $\eps^\star = \eps_k$\;
\caption{Newton-bisection method for distance approximation}
\label{alg_dist}
\end{algorithm}

\section{The two-level method for the membership- and cardinality-con\-strained minimum cut problems}
\label{sec:cardinality-membership}

\subsection{Functional for the membership-constrained minimum cut problem}
Our approach to the membership problem is the same two-level procedure as for the unconstrained minimum cut problem, except that the functional \eqref{F-eps} is replaced by the following functional:
For $\eps>0$ and a matrix $E$ of unit Frobenius norm, let $x=(x_i)\in\R^n$ be the eigenvector to the second smallest eigenvalue $\lambda_2$ of $\Lap(W+\eps E)$. Let $\V^-$ and $\V^+$ be the set of indices whose membership to  different components of the cut graph is prescribed. Let $x^-=(x_i^-)$ with $x_i^-=\min(x_i,0)$ 
and $x^+ =(x_i^+)$ with $x_i^+=\max(x_i,0)$ collect the negative and positive components of $x$, respectively. Let $n^-$ and $n^+$ be the numbers of negative and nonnegative components of $x$, respectively. We denote the averages of $x^-$ and $x^+$ by
$$
\langle x^- \rangle = \frac1{n^-} \sum_{i=1}^n x_i^-, \quad\
\langle x^+ \rangle = \frac1{n^+} \sum_{i=1}^n x_i^+.
$$
Motivated by the special form of the eigenvectors as given in the Fiedler theorem (Theorem~\ref{thm:fiedler}), we consider the functional
\begin{equation} \label{F-card}
F_\eps(E) = \lambda_2(\Lap(W+\eps E)) + \frac\alpha 2 \sum_{i\in \V^-} (x_i - \langle x^- \rangle )^2 +
\frac\alpha 2 \sum_{i\in \V^+} (x_i - \langle x^+ \rangle )^2,
\end{equation}
where $\alpha>0$ is a weight to be chosen. The choice of the sign of the eigenvector $x$ is such that $F_\eps(E)$ takes the smaller of the two possible values. This functional is to be minimized under the inequality constraints $W+\eps E\ge 0$, the norm constraint $\|E\|=1$ and the symmetry and the sparsity pattern of $E$.

\subsection{Functional for the cardinality-constrained minimum cut problem}
For the cardinality-constrained problem we use the same functional $F_\eps$, except that the sets $\V^-$ and $\V^+$ are not given {\it a priori}, but are chosen depending on $E$ in the following way: $\V^-$ and $\V^+$ collect the indices of the smallest and largest $\overline n$ components of the eigenvector $x$, respectively, augmented by those indices for which the components of $x$ do not differ by more than a threshold $\delta$ from the average of the smallest and largest $\overline n$ components, respectively.

\subsection{Constrained gradient flow for the functional $F_\eps$}

\subsubsection{Eigenvector derivatives}
We use the following lemma.
\begin{lemma} \label{lem:eigvecderiv} {\rm \cite[Corollary 4]{MeyS88}}
Consider the differentiable $n\times n$ symmetric-matrix valued function $C(t)$ for $t$ in a neighbourhood of $0$,
let $\lambda(t)$ be a simple eigenvalue of $C(t)$ and let $x(t)$
 be the associated eigenvector 
normalized such that $\|x(t)\|_2=1$.
Moreover, let $M(t) = C(t) - \lambda (t) I$ and let $M(t)^\dagger$ be the Moore-Penrose pseudoinverse of $M(t)$. Then,
the derivative of the eigenvector is given by
\begin{equation}
\dot x(t) =   - M(t)^\dagger \dot M(t) x(t).
\label{eq:deigvec}
\end{equation}
\end{lemma}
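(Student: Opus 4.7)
The plan is to differentiate the defining eigenvalue relation and then invert the resulting linear equation by means of the Moore--Penrose pseudoinverse, fixing the remaining degree of freedom through the normalization of $x(t)$.

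First, from $M(t) x(t) = 0$ with $M(t) = C(t) - \lambda(t) I$, differentiation at $t$ yields the identity $M \dot x = -\dot M\, x$. This uses implicitly that $x(t)$ is differentiable near $0$; the justification is that $\lambda(t)$ stays a simple eigenvalue in a neighbourhood and so is smooth by Lemma~\ref{lem:eigderiv}, after which a smooth normalized eigenvector $x(t)$ can be chosen (e.g.\ via the smooth spectral projector from Kato's perturbation theory).

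Next, I would apply $M^\dagger$ on the left. Because $M$ is symmetric with one-dimensional kernel $\mathrm{span}(x)$, one has $\mathrm{range}(M) = x^\perp$ and therefore $M^\dagger M = I - x x^T$, the orthogonal projector onto $x^\perp$. This yields
\[
(I - x x^T)\,\dot x \;=\; - M^\dagger \dot M\, x.
\]
Differentiating the normalization $\|x(t)\|_2 = 1$ gives $x^T \dot x = 0$, so the left-hand side collapses to $\dot x$. To conclude \eqref{eq:deigvec} it remains to check that the right-hand side also lies in $x^\perp$, which is immediate: $M^\dagger$ is symmetric and annihilates $\ker M = \mathrm{span}(x)$, so $x^T M^\dagger = 0$ and hence $x^T M^\dagger \dot M\, x = 0$.

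The main technical point is not the algebra but the smoothness of $x(t)$ and of $M(t)^\dagger$. The rank of $M(t)$ stays constant (equal to $n-1$) throughout a neighbourhood of $0$, precisely because $\lambda$ remains simple; this constant-rank property is exactly what makes $t \mapsto M(t)^\dagger$ smooth. Without it, the pseudoinverse can be discontinuous in $t$ and formula \eqref{eq:deigvec} would lose meaning. Once smoothness is granted, the argument reduces to the short projector manipulation above, which is why a direct reference to \cite{MeyS88} is appropriate here.
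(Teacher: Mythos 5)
Your derivation is correct, but note that the paper does not prove Lemma~\ref{lem:eigvecderiv} at all: it quotes it from \cite[Corollary 4]{MeyS88}, adding only the remark that the group inverse used there coincides with the Moore--Penrose pseudoinverse in the symmetric case. Your argument therefore supplies a self-contained proof where the paper relies on a citation, and it is sound: differentiating $M x = 0$, applying $M^\dagger$ and using $M^\dagger M = I - x x^T$ (valid because $M$ is symmetric with one-dimensional kernel $\mathrm{span}(x)$, so $\mathrm{range}(M) = x^\perp$), and then removing the projector via $x^T \dot x = 0$ from the unit-norm normalization yields exactly \eqref{eq:deigvec}. Two minor points: the verification that the right-hand side lies in $x^\perp$ is redundant once $x^T\dot x = 0$ has been invoked (though it is a sensible consistency check that the formula respects the normalization); and the differentiability of $x(t)$ does not follow from Lemma~\ref{lem:eigderiv}, which only concerns $\lambda(t)$ --- it is the smoothness of the rank-one spectral projector onto a simple eigenvalue, as you indicate, that does the work. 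The constant-rank observation guaranteeing smoothness of $t \mapsto M(t)^\dagger$ is correct but not actually needed for the pointwise identity \eqref{eq:deigvec}, which only requires differentiability of $M(t)$ and $x(t)$ at the given $t$.
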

We remark that in \cite{MeyS88} this is formulated with the group inverse, which in the symmetric case is the same as the Moore-Penrose pseudoinverse.

\subsubsection{Gradient of $F_\eps$} Consider  a differentiable path $E(t)$ of $\eps$-feasible matrices, and denote the corresponding Laplacian matrix by $L(t)= \Lap(W+\eps E(t))$, by $\lambda_2(t)$ the second smallest eigenvalue of $L(t)$ , and by $x(t)$ the associated eigenvector.
We set
$$
\one^-=(\one^-_i)\in \R^n \quad\hbox{ with }\quad \one^-_i =
\begin{cases} 1 \ \hbox{ if } x_i<0 \\ 0 \ \hbox{ else}, \end{cases}
$$
$$
\one^+=(\one^+_i)\in \R^n \quad\hbox{ with }\quad \one^+_i =
\begin{cases} 1 \ \hbox{ if } x_i\ge 0 \\ 0 \ \hbox{ else}, \end{cases}
$$
and, with $e_i$ denoting the $i$th standard unit vector,
$$
v=v^+ + v^- \quad\text{with}\quad v^\pm = - \sum_{i\in\V^\pm} (x_i-\langle x^\pm \rangle) (e_i - \frac1{n^\pm} \one^\pm).
$$
We define 
$$
z = (L-\lambda_2 I)^\dagger v,
$$
which is computed as the solution of the linear system
\begin{equation}\label{z-ls}
\begin{pmatrix}
L-\lambda_2 I & x \\
x^T & 0
\end{pmatrix}
\begin{pmatrix}
z \\
\mu 
\end{pmatrix}
=
\begin{pmatrix}
v \\
0 
\end{pmatrix}.
\end{equation}
We denote by $x\bullet y=(x_iy_i)$ the vector obtained by componentwise multiplication of the entries of $x$ and $y$.
We then have the following result.

\begin{lemma}\label{lem:F-dot-member} In the above situation we have
\begin{align}\label{G-eps3}
	\frac{d}{dt}F_\eps(E) &= \eps \langle  G_\eps(E) , \dot{E} \rangle, \quad\hbox{ where }\\
	G_\eps(E)&=
	P_{\mathcal{E}}\Bigl(\sym\bigl((x\bullet(x +\alpha z)\one ^T  -x (x+\alpha z)^T\bigr)\Bigr)
\end{align}
is symmetric and has the sparsity pattern determined by the set of edges $\mathcal{E}$. 
\end{lemma}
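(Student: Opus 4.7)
The strategy is to split $F_\eps(E) = \lambda_2 + \frac{\alpha}{2}\Phi(x)$ with $\Phi(x) = \sum_{i\in\V^-}(x_i - \langle x^-\rangle)^2 + \sum_{i\in\V^+}(x_i - \langle x^+\rangle)^2$, differentiate the two contributions separately, and then repackage the result into a single Frobenius inner product with $\dot E$.

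For the eigenvalue term, I follow exactly the calculation of Lemma~\ref{lem:a-dot}: starting from $\dot\lambda_2 = \langle xx^T, \dot L\rangle$ and $\dot L = \eps\Lap(\dot E) = \eps(\diag(\dot E\,\one) - \dot E)$, the identity $\langle xx^T, \diag(\dot E\,\one)\rangle = \langle x^2\one^T, \dot E\rangle$ yields $\dot\lambda_2 = \eps\langle x^2\one^T - xx^T, \dot E\rangle$.

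For the penalty term, I apply the chain rule, noting that $\dot{\langle x^\pm\rangle} = \frac{1}{n^\pm}(\one^\pm)^T\dot x$ when no component of $x$ crosses zero. A short calculation collecting $\dot x_i$ coefficients gives
\[
\frac{d}{dt}\tfrac{\alpha}{2}\Phi(x) = \alpha\sum_{i\in\V^-\cup\V^+}(x_i - \langle x^\pm\rangle)(\dot x_i - \dot{\langle x^\pm\rangle}) = -\alpha v^T\dot x,
\]
with $v = v^- + v^+$ precisely as defined in the statement; the second equality comes from rewriting the sum as an inner product with $-\sum_{i\in\V^\pm}(x_i-\langle x^\pm\rangle)(e_i - \tfrac1{n^\pm}\one^\pm)$. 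Next I use Lemma~\ref{lem:eigvecderiv}: since $M = L - \lambda_2 I$ is symmetric, $M^\dagger$ is symmetric, $M^\dagger x = 0$, and $\dot x = -M^\dagger \dot L x$. Hence
\[
-\alpha v^T\dot x = \alpha v^T M^\dagger \dot L x = \alpha z^T \dot L x,
\]
where $z = M^\dagger v$; this is exactly the vector defined by the bordered system \eqref{z-ls} (the Lagrange multiplier $\mu$ merely enforces $z \perp x$, which holds automatically for $M^\dagger v$). Substituting $\dot L = \eps(\diag(\dot E\,\one) - \dot E)$ and using $z^T\diag(\dot E\,\one)x = \langle (x\bullet z)\one^T,\dot E\rangle$ and $z^T\dot E x = \langle zx^T,\dot E\rangle$, I obtain $\alpha z^T\dot L x = \alpha\eps\langle (x\bullet z)\one^T - zx^T,\dot E\rangle$.

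Adding the two contributions yields
\[
\frac{d}{dt}F_\eps(E) = \eps\bigl\langle \bigl(x\bullet(x+\alpha z)\bigr)\one^T - xx^T - \alpha zx^T,\, \dot E\bigr\rangle.
\]
Since $\dot E$ is symmetric, I may replace the argument by its symmetric part, and since $\dot E = P_\E(\dot E)$ I may project onto $\E$. Using $\sym((x+\alpha z)x^T) = \sym(x(x+\alpha z)^T)$ and $xx^T + \alpha zx^T = (x+\alpha z)x^T$, the expression collapses to $\eps\langle G_\eps(E), \dot E\rangle$ with $G_\eps(E)$ as stated. The main technical subtlety is the chain-rule step for $\Phi$: once one verifies that the derivative of $\langle x^\pm\rangle$ factors through $(\one^\pm)^T\dot x/n^\pm$, the formula for $v$ emerges automatically and the remainder is routine manipulation of inner products.
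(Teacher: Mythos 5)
Your proof is correct and follows essentially the same route as the paper's: differentiate the penalty term to identify the vector $v$, invoke Lemma~\ref{lem:eigvecderiv} together with the symmetry of $M^\dagger$ to write the contribution as $\alpha z^T \dot L x$ with $z = M^\dagger v$, and then reuse the computation of Lemma~\ref{lem:a-dot} to convert everything into an inner product with $\dot E$. The only difference is that you spell out the final repackaging (and the caveat that no component of $x$ crosses zero), which the paper compresses into ``proceeding as in the proof of Lemma~\ref{lem:a-dot}''.
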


\begin{proof} We have
$$
\frac{d}{dt} \, \frac12 \sum_{i\in \V^-} (x_i - \langle x^- \rangle )^2 = 
\sum_{i\in \V^-} (x_i - \langle x^- \rangle )(\dot x_i - \frac{d}{dt}\langle x^- \rangle )
$$
and similarly for the sum over $\V^+$.
With $K= (L-\lambda_2 I)^\dagger$ we obtain from Lemma~\ref{lem:eigvecderiv} that
$$
\dot x_i = -e_i^T K\dot L x, \qquad \frac{d}{dt}\langle x^\pm \rangle =\frac1{n^\pm}\one^{\pm T} K\dot L x,
$$
so that
$$
\frac{d}{dt} \biggl( \frac12 \sum_{i\in \V^-} (x_i - \langle x^- \rangle )^2 +   \frac12 \sum_{i\in \V^+} (x_i - \langle x^+ \rangle )^2
\biggr)= v^{T}K\dot L x = 
\langle Kv x^T, \dot L \rangle = \langle z x^T, \dot L \rangle.
$$
Using the expression for $\dot L$ given in \eqref{milestone2} and proceeding as in the proof of Lemma~\ref{lem:a-dot} gives the result.
\end{proof}

The computational cost of computing $G_\eps(E)$ lies in computing the second eigenvalue and its eigenvector and in solving the linear system \eqref{z-ls}. For a sparse weight matrix, these computations have a complexity that is linear in the number of vertices.
With this gradient $G_\eps(E)$, the further procedure is now exactly the same as in Section~\ref{sec:two-level method}.

\section{The two-level method for the ambiguity problem}
\label{sec:ambiguity}

\subsection{Two-level formulation}
For the ambiguity problem we proceed similarly as in Section~\ref{sec:two-level method}.
\begin{enumerate}
\item Given $\eps>0$, we look for a symmetric matrix $E\in\R^{n\times n}$ with the same sparsity pattern as $W$ (i.e., $e_{ij}= 0$ if $w_{ij}=0$), of unit Frobenius norm,  with $W+\eps E\ge 0$ (with componentwise inequality)  such that the difference between the third and second smallest eigenvalues of $\mathrm{Lap}(W+\eps E)$ is minimized. The obtained minimizer is denoted by $E(\eps)$.
\item We look for the smallest value of $\eps$ such that the second and third  eigenvalues of 
$\mathrm{Lap}(W+\eps E(\eps))$ coalesce.
\end{enumerate}

In order to compute $E(\eps)$ for a given $\eps>0$, we make use of a constrained gradient system for the functional
\begin{equation}\label{F-eps2}
F_\eps(E) = \lambda_3\bigl( \mathrm{Lap}(W+\eps E) \bigr)-\lambda_2\bigl( \mathrm{Lap}(W+\eps E) \bigr),
\end{equation}
under the inequality constraints $W+\eps E\ge 0$, the norm constraint $\|E\|=1$ and the symmetry and the sparsity pattern of $E$.

In the outer iteration we compute the optimal $\eps$, denoted $\eps^\star$, by a combined Newton-bisection method as in Section~\ref{sec:two-level method}.

\subsection{Gradient of $F_\eps$}
Consider  a regular path $E(t)$ of $\eps$-feasible matrices, and denote the corresponding Laplacian matrix by $L(t)= \Lap(W+\eps E(t))$ and by $\lambda_2(t)$ and $\lambda_3 (t)$ the second and third smallest eigenvalues of $L(t)$, respectively.
We denote by  $x(t)$ and $y(t)$ the  corresponding eigenvectors of unit Euclidean norm.

\begin{lemma}\label{lem:a-dot-amb} In the above situation we have
\begin{align*}\label{G-eps4}
	\frac{d}{dt}F_\eps(E) &= \eps \langle  G_\eps(E) , \dot{E} \rangle, \quad\hbox{ where }\\
	G_\eps(E)&=- P_{\mathcal{E}}\bigl(\sym(x^2 \one ^T)-x x^T-\sym(y^2 \one ^T)+yy^T\bigr)
\end{align*}
is symmetric and has the sparsity pattern determined by the set of edges $\mathcal{E}$. 
\end{lemma}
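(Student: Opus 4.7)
The plan is to reduce this result to Lemma~\ref{lem:a-dot} by treating $\lambda_2(t)$ and $\lambda_3(t)$ separately and then subtracting. Implicit in the hypothesis of a regular path of $\eps$-feasible matrices is that both $\lambda_2$ and $\lambda_3$ are simple eigenvalues of $L(t)$ along the path, so that $x(t)$ and $y(t)$ are smooth and Lemma~\ref{lem:eigderiv} applies to each. This gives $\dot\lambda_2 = x^T \dot L x$ and $\dot\lambda_3 = y^T \dot L y$.

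First I would insert $\dot L = \eps\,\Lap(\dot E) = \eps(\diag(\dot E\,\one) - \dot E)$ from \eqref{milestone2} into both expressions. Then, following verbatim the manipulation \eqref{xx} from the proof of Lemma~\ref{lem:a-dot}, I would rewrite $\langle xx^T, \diag(\dot E\,\one)\rangle = \langle x^2 \one^T, \dot E\rangle$ and likewise $\langle yy^T, \diag(\dot E\,\one)\rangle = \langle y^2 \one^T, \dot E\rangle$. Subtracting and using symmetry of $\dot E$ to replace $x^2\one^T$ and $y^2\one^T$ by their symmetric parts, I obtain
\[
\dot\lambda_3 - \dot\lambda_2 = \eps\,\bigl\langle \sym(y^2\one^T) - yy^T - \sym(x^2\one^T) + xx^T,\ \dot E \bigr\rangle.
\]
Finally, since $\dot E = P_\E(\dot E)$ by constraint (iii'), the inner product is unchanged when the first argument is projected by $P_\E$, and an overall sign yields the claimed formula.

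The only nontrivial point is the differentiability of the two eigenvalues along the path. For it to hold it is essential that the path be restricted to the regime $\lambda_2 < \lambda_3$; at the crossing point of interest (where $F_\eps = 0$) the individual eigenvalues are only Lipschitz rather than smooth, and the gradient in the stated form is not defined there. However, this is precisely the situation the outer iteration is designed to approach from one side, so the lemma as stated suffices for the gradient-flow construction that follows in the remainder of Section~\ref{sec:ambiguity}. Symmetry of $G_\eps(E)$ and its sparsity pattern are both inherited immediately from the projection $P_\E$ and the symmetry of each of the four matrices involved.
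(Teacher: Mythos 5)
Your argument is correct and is precisely the intended one: the paper omits the proof of this lemma, leaving it as an immediate consequence of applying the eigenvalue-derivative computation of Lemma~\ref{lem:a-dot} to $\lambda_2$ and $\lambda_3$ separately and subtracting, which is exactly what you do (and your sign bookkeeping matches the stated $G_\eps(E)$). Your remark on the need for simplicity of both eigenvalues away from the coalescence point is a sensible caveat consistent with the paper's treatment.
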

 
With this gradient we then proceed further as in Section~\ref{sec:two-level method}.

\section{Algorithmic aspects}
\label{sec:alg-aspects}

\subsection{Discretizing the constrained gradient flow} \label{subsec:euler}
We use a modified explicit Euler method for the approximate integration of the differential equation \eqref{cgf}. For a given $\eps>0$, a step-size $h>0$ and from the $\eps$-feasible perturbation matrix $E^n$ of the $n$th time step, we compute $E^{n+1}$ as follows.  We compute $G_\eps(E^n)=\bigl(g_{ij}^n\bigr)$ and define $\widetilde E^{n+1}=\bigl(\tilde e_{ij}^{n+1}\bigr)$ by setting
$$
\tilde e_{ij}^{n+1} = e_{ij}^n - h g_{ij}^n \qquad \text{if}  \quad \ w_{ij}+ \eps\bigl( e_{ij}^n - h g_{ij}^n \bigr) \ge 0
$$
and else 
$$
\tilde e_{ij}^{n+1} = e_{ij}^n - \theta h g_{ij}^n \quad\ \text{ with $\theta\in[0,1)$ such that } 
w_{ij}+ \eps\bigl( e_{ij}^n - \theta h g_{ij}^n \bigr) = 0,
$$
that is, with
$
\theta = ({w_{ij}+ \eps e_{ij}^n})/({\eps h g_{ij}^n}).
$
We would ideally take the new perturbation matrix $E^{n+1}$ such that 
$$
\| E^{n+1} - \widetilde E^{n+1} \| \to \min  \quad\text{ subject to}\quad
\| E^{n+1} \| =1 \ \text{ and } \ W+\eps E^{n+1}\ge 0.
$$
We approximate this optimization problem by treating the two constraints one after the other in an alternating way. 
Let $\E_0$ be the set of edges $(i,j)\in\E$ for which $w_{ij}+\eps \tilde e_{ij}^{n+1} = 0$ (cut edges), and let 
$P^0 = P_{\E_0}$ and $P^+=P_{\E\setminus \E_0}$ be the complementary projections as defined in Section~\ref{subsec:gradient}.
We first normalize by choosing
$\rho>0$ such that
$$
\widehat E^{n+1} = P^0 \widetilde E^{n+1} + \rho P^+ \widetilde E^{n+1}
$$
has unit Frobenius norm, i.e.,
$$
\rho= \frac{\sqrt{1-\| P^0  \widetilde E^{n+1} \|^2}}{ \| P^+ \widetilde E^{n+1} \|^2}.
$$
(In case that $\| P^0  \widetilde E^{n+1} \|$ is larger than 1 or very close to 1, we replace $P^0  \widetilde E^{n+1}$ by $P^0E^n$ in the two lines above.)
We denote by $\E_-$ the set of inadmissible edges $(i,j)$ for which $w_{ij}+\eps \hat e_{ij}^{n+1} < 0$. We then  reset $\hat e_{ij}^{n+1}$ to 
$$
e_{ij}^{n+1} = -\frac{w_{ij}}\eps \quad\ \text{ for }\quad (i,j)\in \E_-,
$$
augment $\E_0 := \E_0\cup \E_-$ and consider the updated projection $P^0=P_{\E_0}$. We then normalize the so obtained matrix $E^{n+1}$ in the same way as above by leaving the entries of $P^0E^{n+1}$ unchanged, reset the entries for inadmissible edges, normalize, and so on. As there are only finitely many edges, this iteration terminates after finitely many steps (typically after the first step). Finally, we have obtained an $\eps$-feasible perturbation matrix $E^{n+1}$.

\subsection{Choice of step-size}
The step-size $h$ can, for example, be selected by the following adaptive algorithm. Here the objective is to reduce the function $F_\eps$, not to follow accurately a trajectory of the constrained gradient differential equation.
%
%
			%
			%

\IncMargin{1em}
\begin{algorithm}
\DontPrintSemicolon
\KwData{Matrix $E_{n}$ and stepsize $h_{n-1}$ are given}
\KwResult{Matrix $E_{n+1}$ and stepsize $h_n$}
\Begin{
\nl Initialize the step-size by the previous step-size, $h=h_{n-1}$\;
\nl Compute $E_{n+1}(h)$ and its function value $F_\eps(E_{n+1}(h))$ with the step-size $h$\;
\eIf{$F_\eps(E_{n+1}(h)) \ge F_\eps(E_{n})$}
{
halve the step-size, $h:=h/2$ and repeat from 2
}
{
\If{$h=h_{n-1}$}{compute $E_{n+1}(2h)$ and its function value $F_\eps(E_{n+1}(2h))$ with the step-size $2h$}
\If{$F_\eps(E_{n+1}(2h)) \le F_\eps(E_{n+1}(h))$}{double the step-size, $h:=2h$}
Set $h_n=h$ and $E_{n+1}=E_{n+1}(h)$\;
}
\nl Return $E_{n+1}$ and $h_n$\;
}
\caption{Step-size selection}
\label{alg_step}
\end{algorithm}

\subsection{Stopping criterion} 
Let $F^n=F_\eps(E^n)$. In order to stop the integration when $F^n$ has approximately reached a stationary value, we use a criterion of the following type:
$$
\text{Integrate until }\quad F^n - F^{n+1} \le \beta h F^n+\delta \quad\text{or} \quad F^n \le \text{tol},
$$
where tol is a tolerance parameter (e.g., tol = $10^{-6}$) and $\beta$ and $\delta$ are further parameters. We had good experience with the choice $\beta=10\,\cdot\,$tol and $\delta=\text{tol}/100$; see further \cite{edelmann-master} where also the sensitivity of the algorithm to the chosen parameters is discussed.

\subsection{Initial value of the constrained gradient flow for a new $\eps$} When we change to a new value of $\eps$ in the outer iteration, we need an initial value for the constrained gradient flow. A first idea might be to take the terminal perturbation matrix $\widetilde E^0=E(\eps_{\rm old})$ as the initial value, but usually this does not satisfy the nonnegativity constraints $W+\eps E^0\ge 0$ if $\eps\ge\eps_{\rm old}$. We therefore modify $\widetilde E^0$ to $E^0$ by solving approximately
$$
\| E^{0} - \widetilde E^{0} \| \to \min  \quad\text{subject to}\quad
\| E^{0} \| =1 \ \text{ and } \ W+\eps E^0\ge 0,
$$
alternating between normalization and enforcing the nonnegativity constraints as in Section~\ref{subsec:euler}, but this time beginning with the empty set $\E_0=\emptyset$.

\subsection{Choice of the inital perturbation size $\eps_0$ and the initial perturbation matrix} 
While one might just start with a random perturbation, a more educated guess starts from the normalized free gradient  $E^0=-G_\eps(0)/\| G_\eps(0)\|$ and determines $\eps_0$ as the largest number $\eps$ such that $W+\eps E^0\ge 0$.

\subsection{Stopping the outer iteration before convergence} In the exact solution $W^\star=W+\eps^\star E^\star$ to the constrained minimum cut problem, the entries of $W^\star$ are either zero or those of $W$. To decide about the cut, it is therefore not necessary to iterate towards $W^\star$ with very high accuracy, but instead the cut can be inferred earlier from a moderately accurate approximation $W+\eps E(\eps)$, for example using the following criterion, with a small threshold parameter $\vartheta>0$:
$$
\text{Stop if for every edge $(i,j)\in\E$, either  $w_{ij} +\eps e_{ij} \le \vartheta w_{ij}$ or $|\eps e_{ij}| \le \vartheta w_{ij}$.}
$$
In the first case one would then cut to $w_{ij}^\star=0$ and in the second case one would leave the weight unchanged: $w_{ij}^\star=w_{ij}$. It can finally be checked if the so obtained cut graph is indeed disconnected, by computing $\lambda_2(\Lap(W^\star))$. Instead, in the ambiguity problem such a shortcut is not feasible.



%
\section{Numerical examples}
\label{sec:num-examples}

We consider a few illustrative examples for both the cardinality and the membership constraints. At the end we will also consider graphs 
to illustrate the ambiguity problem.


The standard Fiedler spectral partitioning algorithm, to which we refer below, is simply based on the sign of the components 
of the eigenvector of $\Lap(W)$ associated to $\lambda_2$.

\begin{ex}[Zachary's karate club]\label{ex:zac}
\rm 
This  weighted graph consisting of $34$ vertices describes 
the relationship between $34$ members of a karate club (for a detailed 
description see \cite{Zac77}).
Using the Fiedler spectral partitioning we obtain two connected components 
 of $16$ and $18$ vertices as shown in Figure \ref{fig:zachari1}.

According tho the standard Fiedler partitioning the first component is 
led by the vertex labeled as $1$, while the second one is led 
by the vertex labeled as $34$. 

We next consider the following constraints:
\begin{enumerate}
\item[(i)] Cardinality constraint with threshold equal to $\bar n=17$ vertices;
\item[(ii)] Membership constraint;
\item[(iii)] Both constraints.
\end{enumerate}
In more detail:
\begin{enumerate}
\item[(i)] By asking for a cardinality constraint with $\bar n=17$ vertices in each component, the approximate computed 
distance is $\eps^\star\approx 10.03631$. 
The results in Table \ref{tab:ex} are obtained by setting a tolerance ${\rm tol}=10^{-5}$ 
and the weight $\alpha=3$ in \eqref{F-card}.
With respect to the standard partition obtained by the Fiedler eigenvector of $\Lap(W)$, the vertex that 
changes partition is the vertex labeled as $9$.

\begin{table}[h!]
  \centering
 \caption{Computed values of $\varepsilon$, $f(\varepsilon)=F_\eps(E(\eps))$  for Example \ref{ex:zac},\textrm{(i)}}
  \label{tab:ex}
  \begin{tabular}{|c|l|l|}
\hline
    k & $\varepsilon_k$  & $f(\varepsilon_k)$  \\
    \hline\hline
0&   1.355198757424337 &  5.000000000000000\\
1&   1.142857142857148 & 10.036313891705188\\
2&   0.000001319577846 & 10.036313891705202\\

\hline
  \end{tabular}
\end{table}

\item[(ii)] 
In this second case we consider the membership constraint: we ask for the vertices
$1$ and $34$ to be in different partitions. Moreover we consider $4$ different 
cases, that is vertex $9$ to be in the same connected component as vertex $1$, vertex $32$ 
to be in the same component as vertex $1$, vertex $14$ to be in the same component 
as vertex $34$, or vertex $20$ in the same component  as vertex $34$.

For these four examples,  Table \ref{tab:ex2} reports the values of 
$\varepsilon^\star$ 
for the functional $F_\eps$ of  \eqref{F-card} with $\alpha=3$,
as computed
by setting the tolerance ${\rm tol}=10^{-5}$. 
We can see from Table \ref{tab:ex2} that vertex $9$ is the easiest to be required for 
changing the connected component; on the other hand, vertex $20$ turns out to be the most difficult 
to change the component.

\begin{figure}[h!]
\centering
\subfloat[][Zachary's karate club colored by standard Fiedler partitioning.]{\label{fig:zachari1}\includegraphics[width=.46\textwidth]{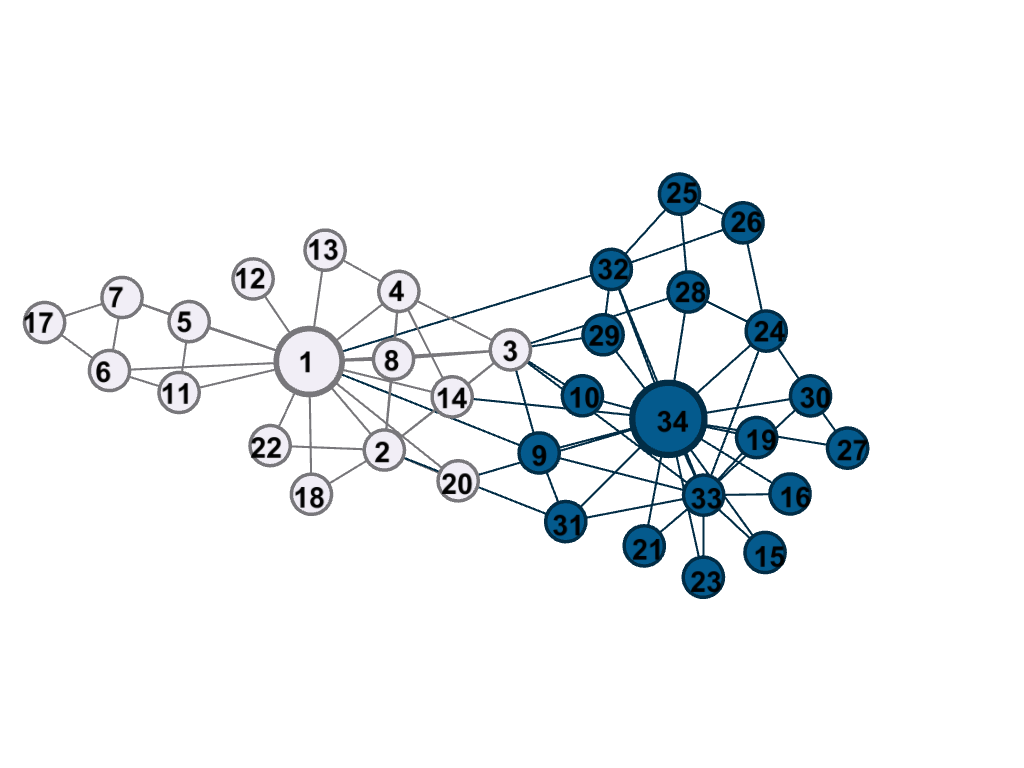}}\quad
\subfloat[][Zachary's karate club-membership constraint: highlighted the vertices required to change partition with respect to the clustering shown in (a).]{\label{fig:zachari2}\includegraphics[width=.46\textwidth]{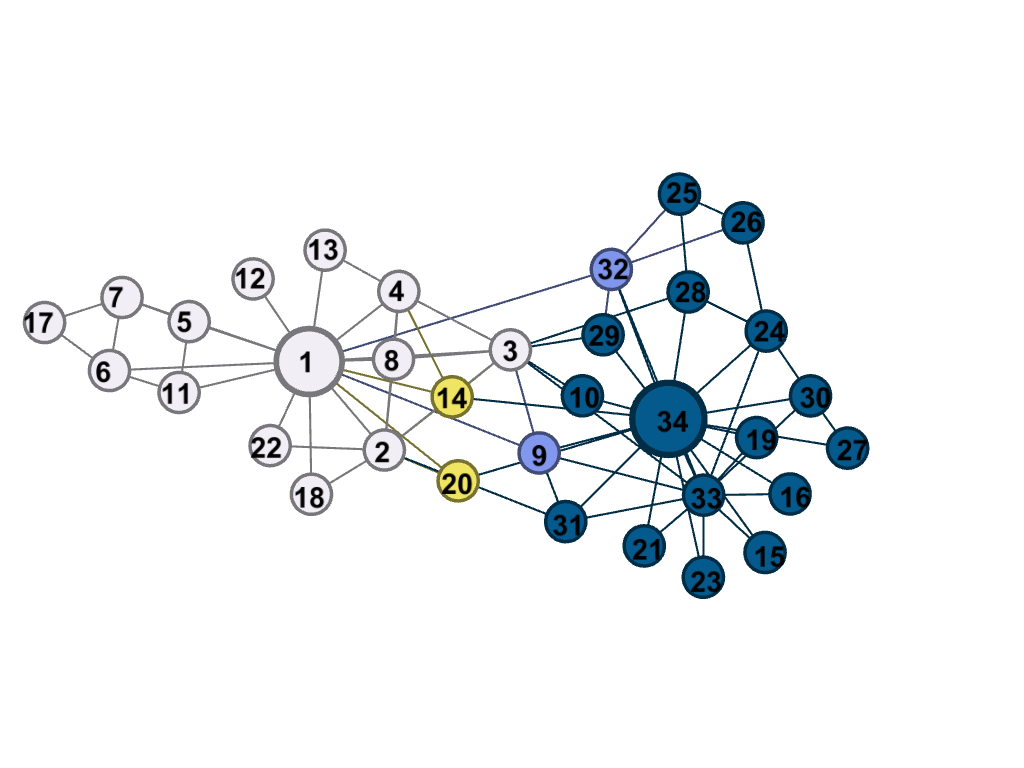}}
\caption{Example \ref{ex:zac}: Zachary's karate club}
\label{fig:fig}
\end{figure}

\begin{table}[h!]
  \centering
 \caption{Computed values of $\varepsilon^{\star}$ for Example \ref{ex:zac},\textrm{(ii)}}
  \label{tab:ex2}
  \begin{tabular}{|c|l|}
\hline
    node & $\varepsilon^{\star}$   \\
    \hline\hline
   9 &  16.947756820436005 \\
  14 & 19.816423934360159  \\
  20 & 26.394452875575567  \\
  32 & 19.849724386431539  \\
\hline
  \end{tabular}
\end{table}

\item[(iii)] Finally we consider both constraints, that is, we ask for a cardinality constraint with threshold $\bar n=17$ vertices, and we require that vertex $9$ is in the same connected component as the vertex $34$.
Table \ref{tab:ex3} shows values of $\varepsilon_k$, $f(\varepsilon_k)=F_{\eps_k}(E(\eps_k))$ computed with the tolerance ${\rm tol}=10^{-5}$ and the weights $\alpha_c=3,\ \alpha_m=10$ in the functional $F_\eps$ that combines the cardinality and membership functionals.
We obtain that the vertex $10$ further changes the connected component in order to satisfy the cardinality constraint.
\end{enumerate}

\begin{table}[h!]
  \centering
 \caption{Computed values of $\varepsilon_{k}$, $f(\varepsilon_{k})$ for Example \ref{ex:zac},\textrm{(iii)}}
  \label{tab:ex3}
  \begin{tabular}{|c|l|l|}
\hline
    k & $\varepsilon_{k}$   & $f(\varepsilon_{k})$ \\
    \hline\hline
0 &  1.401034325554263 &  5.000000000000000 \\
1 &  1.279411764675930 & 10.206652233031356 \\
2 &  0.000000000000002 & 10.206652233031399 \\

\hline
  \end{tabular}
\end{table}
\end{ex}

\begin{ex}[A misbehavior of the algorithm]\label{ex:failure} 
\rm
We present an example where the algorithm fails.

Consider an unweighted graph with $N$ vertices, such that each vertex $2, \ldots, N$ is connected to the following two vertices, i.~e.
\[ w_{i,i+1} = w_{i,i+2} = 1 \text{ for all }2 \le i \le N-2 \,.\]
The first vertex is connected to the second one, i.e., $w_{1,2} = 1$, but not to the third one. 
\begin{figure}[h!]
\centering{
\begin{tikzpicture}[scale=0.9]
	\Vertex[x=0,y=0]{1}
    \Vertex[x=2,y=0]{2}
    \Vertex[x=4,y=0]{3}
    \Vertex[x=6,y=0]{4}
    \Vertex[x=8,y=0]{5}
    \Vertex[x=10,y=0]{6}
    \Vertex[x=12,y=0]{7}
    \Vertex[x=14,y=0]{8}
    \Edge(1)(2)
    \Edge(2)(3)
    \Edge(3)(4)
    \Edge(4)(5)
    \Edge(5)(6)
    \Edge(6)(7)
    \Edge(7)(8)
    \tikzstyle{EdgeStyle}=[bend left]
    \Edge(2)(4)
    \Edge(4)(6)
    \Edge(6)(8)
    \tikzstyle{EdgeStyle}=[bend right]
    \Edge(3)(5)
    \Edge(5)(7)
\end{tikzpicture}
}
\caption{Graph with 8 vertices.}
\label{ex:counter}
\end{figure}
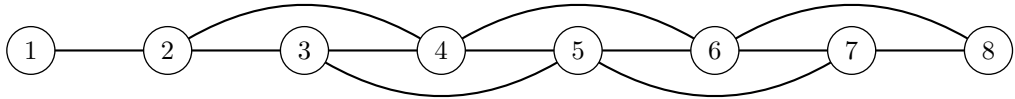
It is clear that the minimum cut is obtained by removing the edge $(1,2)$, since this is the only possibility to obtain a disconnected graph when only one edge is removed. 
We tried to solve this example for different values of $N$.
The algorithm works correctly when $N=8$ but fails when $N \ge 12$. 
In the latter case we obtain a disconnected graph, but with the wrong edges removed. 
For $N=20$ 
the resulting partition is $\{1,2,\ldots,10\} \cup \{11,12,\ldots,20\}$ 
instead of the correct partition $\{1\}\cup\{2,3,\ldots,20\}$.
If we impose a cardinality constraint with $\bar n=10$ we get the same solution.
\end{ex}

\begin{figure}[h!]
\centering
\subfloat[][Books about US Politics colored by Fiedler eigenvector.]{\label{fig:pol_fiedler2}
\includegraphics[width=.45\textwidth]{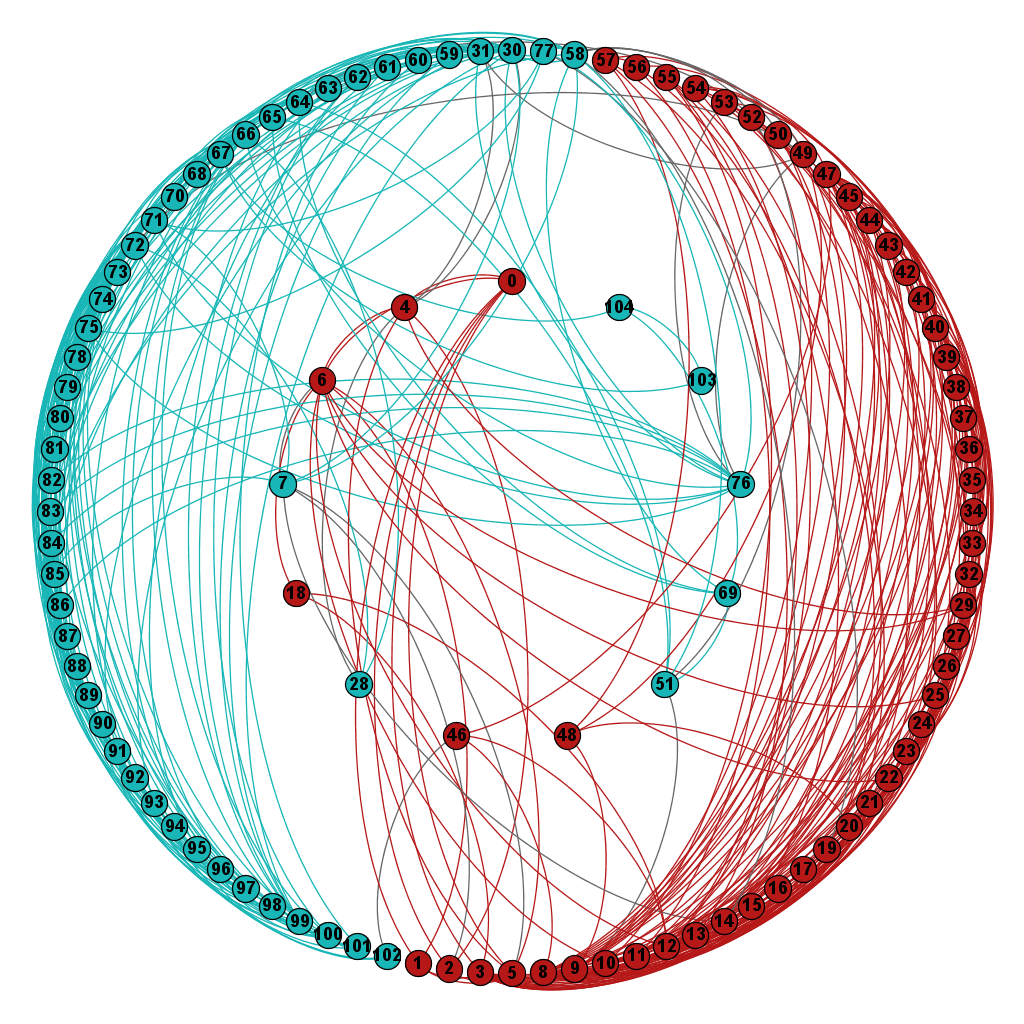}} \qquad
\subfloat[][Books about US Politics colored red, blue or green, based on book buying data.]
{\label{fig:pol_orig2}\includegraphics[width=.45\textwidth]{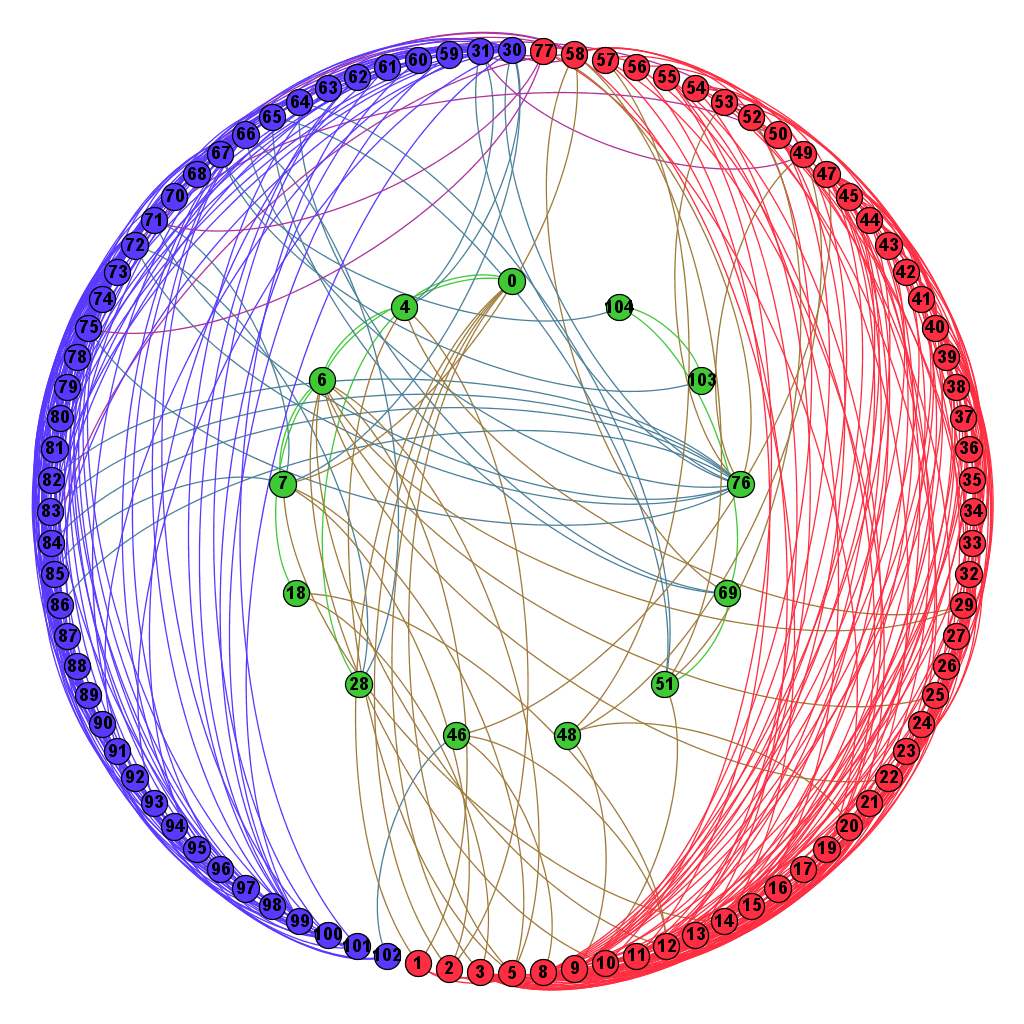}}

\caption{Example \ref{ex:pol}: Books about US Politics}
\label{fig:fig2}
\end{figure}

\begin{ex}[Books about US Politics]\label{ex:pol}
\rm This graph is a network of $105$ vertices, each one representing a book about US politics sold in 2004 by an online bookseller \cite{krebs04}. 
Two books are linked if they were purchased by the same person, and the books are colored red, blue or green, based on book buying data (see Fig.(\ref{fig:pol_orig2})). 
The links determine the grouping and coloring of the vertices. 
The vertices are colored by Fiedler spectral partitioning in Figure \ref{fig:pol_fiedler2}. By this partitioning we obtain two connected components of $52$ and $53$ vertices. 
We ask for the membership constraints: we compute the distance (see Table \ref{tab:expol}) for each one of the vertex in the green group of Figure \ref{fig:pol_orig2} to belong to the blue group ($\varepsilon_b^{\star}$) or to belong to the red group ($\varepsilon_r^{\star}$).
The results reported in the tables are obtained with 
the weight $\alpha=1$. 

\begin{table}[h!]
  \centering
 \caption{Computed values of $\varepsilon^{\star}$ for Example \ref{ex:pol}}
  \label{tab:expol}
  \begin{tabular}{|c|l|l|}
\hline
    vertex & $\varepsilon_b^{\star}$   & $\varepsilon_r^{\star}$ \\
     \hline\hline
0&  19.550643238475178  &  10.139259626395429 \\
4&  17.599307526562065  &  10.139259626395429 \\
6&  80.390748501362353  &  10.139259626395429 \\
7&  10.277273821728212  &  14.165678052077872 \\
18& 31.581044248528972  &  10.139259626395429 \\
28&  10.139259626395429 &  38.150041659347089 \\
46&  12.674010726779485 &  10.139259626395429 \\
48&  72.656404890260845 &  22.909386776883416 \\
51&  13.625085422874610 &  17.008619024532120 \\
69&  10.139259626395429 &  26.824720735807777 \\
76&  22.909386776883416 &  30.612769763645328 \\
103& 10.139259626395429 &  13.668979290979349 \\
104& 10.139259626395429 &  14.953186306634851 \\
\hline
\end{tabular}
\end{table}

\end{ex}

\begin{ex}[Les Miserables]\label{ex:lesmis}
\rm Figure \ref{fig:fig3} shows the graph of character co-occurence in Les Miserables \cite{Knuth:2009:SGP:1610343}.  
This graph consists of $77$ vertices (representing characters).
According to the Fiedler partitioning, $22$ of these belong to one part and the remaining $55$ belong to the other part.
Asking for the partitioning of the graph with the cardinality constraint with threshold $\bar n=35$, we obtain the
result shown in Figure \ref{fig:fig3}. 


\begin{figure}[h!]
\centering
\includegraphics[width=.55\textwidth]{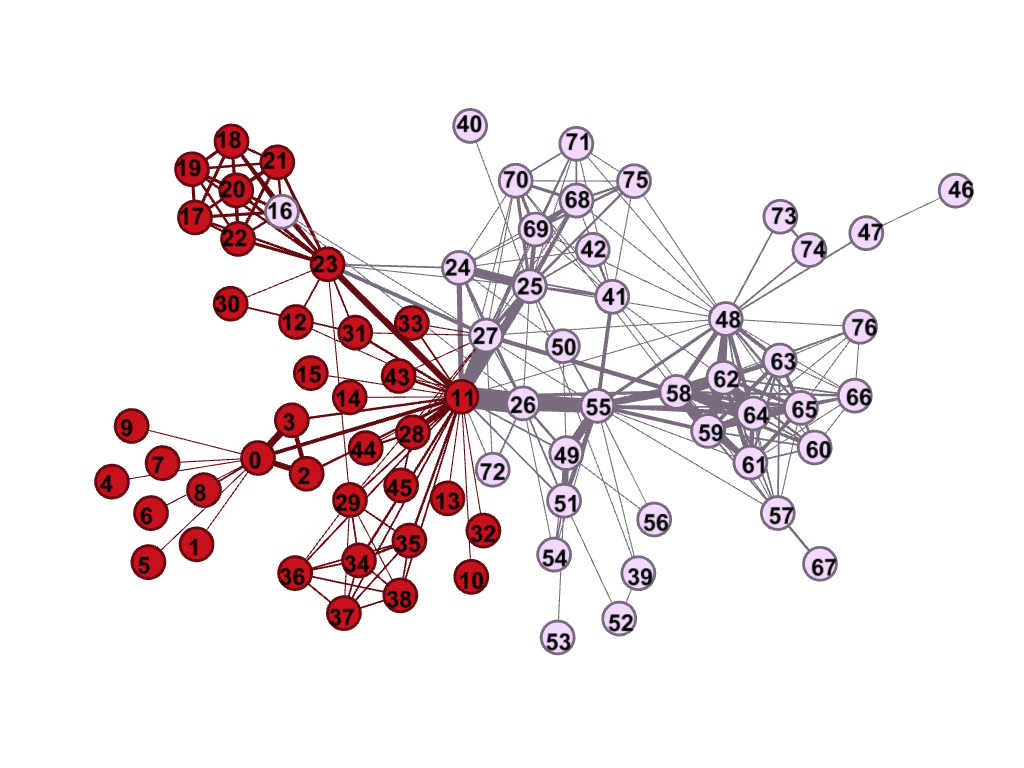}
\caption{Example \ref{ex:lesmis}: Les Miserables, cardinality-constrained graph partitioning}
\label{fig:fig3}
\end{figure}
\end{ex}


\begin{ex}[Planted Partition Model --- ambiguity problem]\label{ex:plant}
\rm We consider here a class of graphs for which we investigate the distance to ambiguity (that is, a coalescence of the second and third eigenvalues in the 
associated Laplacian matrix).  We consider 
Planted Partition Models \cite{Condon:2001:AGP:373515.373517}, which constitute a special case of Stochastic Block models, a commonly used generative model for social and biological networks. The probability matrix consists of a constant value $p_{in}$ on the diagonal and a different constant value $p_{out}$ off the diagonal; in addition, the number of vertices is $n=40$, while the communities are $4$, each one made of $10$ vertices.
In Figure \ref{fig:fig2} we can see two Planted Partition Models; on the left when $p_{in}=0.8$ and $p_{out}=0.2$, on the right when $p_{in}=0.9$ and $p_{out}=0.1$.
In Table \ref{tab:ex6} a comparison among various values of $p_{in}$ and $p_{out}$ is shown.
These results are obtained by setting the tolerance to $tol=10^{-5}$.
\end{ex}

\begin{figure}[h!]
\centering
\subfloat[][Sparsity pattern for PPM with $p_{in}=.8$ and $p_{out}=.2$]{\label{fig:plant1}\includegraphics[width=.40\textwidth]{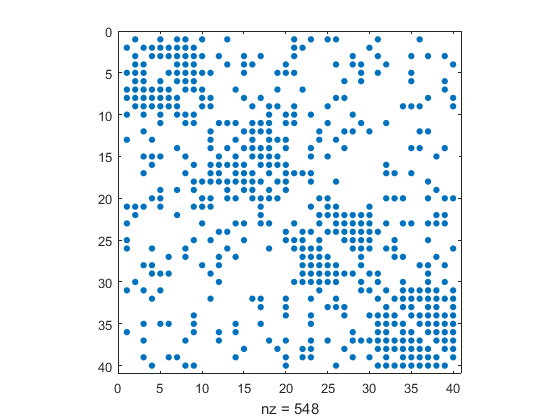}}\quad
\subfloat[][Sparsity pattern for PPM with $p_{in}=.9$ and $p_{out}=.1$]{\label{fig:plant3}\includegraphics[width=.40\textwidth]{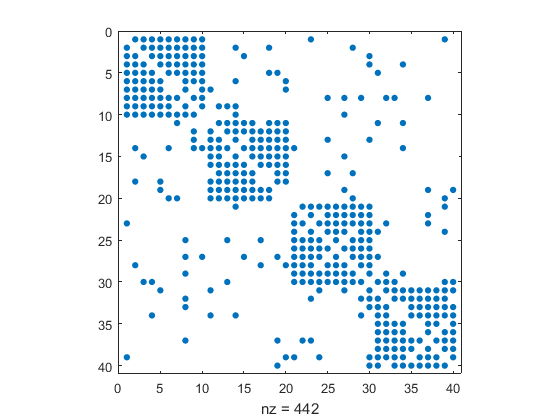}}
\caption{Example 4: Planted Partition Model}
\label{fig:fig4}
\end{figure}

%

\begin{table}[h!]
  \centering
 \caption{Example \ref{ex:plant}: Computed values of $\varepsilon^{\star}$ for  various parameters}
  \label{tab:ex6}
  \begin{tabular}{|c|l|l|l|l|}
\hline
    $p_{in}$ & $p_{out}$ & $\varepsilon^{\star}$   \\
    \hline\hline
   0.80 & 0.20 & 1.310680592143721   \\
   0.85 & 0.15 & 1.011621669775467   \\
   0.90 & 0.10 & 1.068267456259814   \\
   0.95 & 0.05 & 0.848607315993027   \\
   1.00 & 0.00 & 0  \\
\hline
  \end{tabular}
\end{table}

The results show that relatively small perturbations may yield an ambiguity in the Fiedler partitioning for these graphs.
 
\section*{Acknowledgments}

The authors thank Daniel Kressner (EPFL, Lausanne) for interesting discussions during an Oberwolfach meeting and
Armando Bazzani (University of Bologna, Italy) for stimulating discussions.

Part of this work was developed during some visits to Gran Sasso Science Institute in L'Aquila and to the University of T\"ubingen.
The authors thank both institutions for the very kind hospitality.

N. Guglielmi thanks the Italian M.I.U.R. and the INdAM GNCS for financial support and also the Center of Excellence DEWS.


\begin{thebibliography}{10}

\bibitem{BEHM07}
M.~Bruglieri, M.~Ehrgott, H.~W. Hamacher, and F.~Maffioli.
\newblock An annotated bibliography of combinatorial optimization problems with
  fixed cardinality constraints.
\newblock {\em Discrete Appl. Math.}, 154(9):1344--1357, June 2006.

\bibitem{BME04}
M.~Bruglieri, F.~Maffioli, and M.~Ehrgott.
\newblock Cardinality constrained minimum cut problems: Complexity and
  algorithms.
\newblock {\em Discrete Appl. Math.}, 137(3):311--341, March 2004.

\bibitem{Ch97}
F.~R.~K. Chung.
\newblock {\em Spectral Graph Theory}.
\newblock American Mathematical Society, 1997.

\bibitem{Condon:2001:AGP:373515.373517}
A.~Condon and R.M. Karp.
\newblock Algorithms for graph partitioning on the planted partition model.
\newblock {\em Random Struct. Algorithms}, 18(2):116--140, March 2001.

\bibitem{edelmann-master}
D.~Edelmann.
\newblock {\em Graph partitioning using differential equations}.
\newblock Master Thesis, Univ. T\"ubingen, 2017.

\bibitem{fiedler}
M.~Fiedler.
\newblock Algebraic connectivity of graphs.
\newblock {\em Czechoslovak Math. J.}, 23(98):298--305, 1973.

\bibitem{F13}
R.~Fletcher.
\newblock {\em Practical methods of optimization}.
\newblock John Wiley \& Sons, 2013.

\bibitem{GKL15}
N.~Guglielmi, D.~Kressner, and C.~Lubich.
\newblock Low rank differential equations for {H}amiltonian matrix nearness
  problems.
\newblock {\em Numer. Math.}, 129:279--319, 2015.

\bibitem{GL11}
N.~Guglielmi and C.~Lubich.
\newblock Differential equations for roaming pseudospectra: paths to extremal
  points and boundary tracking.
\newblock {\em {SIAM} J. Numer. Anal.}, 49:1194--1209, 2011.

\bibitem{GL18}
N.~Guglielmi and C.~Lubich.
\newblock Matrix stabilization using differential equations.
\newblock {\em {SIAM} J. Numer. Anal.}, page in press, 2018.

\bibitem{GLM17}
N.~Guglielmi, C.~Lubich, and V.~Mehrmann.
\newblock On the nearest singular matrix pencil.
\newblock {\em {SIAM} J. Matrix Anal. Appl.}, 38:776--806, 2017.

\bibitem{Kat95}
T.~Kato.
\newblock {\em Perturbation Theory for Linear Operators}.
\newblock Springer Verlag, New York, N.Y., 1995.

\bibitem{Knuth:2009:SGP:1610343}
D.~E. Knuth.
\newblock {\em Stanford GraphBase: A Platform for Combinatorial Computing,
  The}.
\newblock Addison-Wesley Professional, 1st edition, 2009.

\bibitem{krebs04}
V.~Krebs.
\newblock Books about {US} politics.
\newblock {\em Unpublished, compiled by M. Newman. Retrieved from
  http://www-personal.umich.edu/{~}mejn/netdata}, 2004.

\bibitem{MeyS88}
C.D. Meyer and G.W. Stewart.
\newblock Derivatives and perturbations of eigenvectors.
\newblock {\em {SIAM} J. Numer. Anal.}, 25:679--691, 1988.

\bibitem{Sp07}
D.~Spielman.
\newblock Spectral graph theory and its application.
\newblock In {\em Proceedings of the 48th Annual IEEE Symposium on Foundations
  of Computer Science}, pages 29--38, 2007.

\bibitem{SW97}
M.~Stoer and F.~Wagner.
\newblock A simple min-cut algorithm.
\newblock {\em Journal of the ACM (JACM)}, 44(4):585--591, 1997.

\bibitem{VL07}
U.~von Luxburg.
\newblock A tutorial on spectral clustering.
\newblock {\em Statistics and computing}, 17(4):395--416, 2007.

\bibitem{Zac77}
W.W. Zachary.
\newblock An information flow model for conflict and fission in small groups.
\newblock {\em Journal of Anthropological Research}, 33:452--473, 1977.

\end{thebibliography}

\end{document}